\newenvironment{nouppercase}{%
  \renewcommand{\uppercasenonmath}[1]{}}{}
\theoremstyle{definition}
\newtheorem{theorem}[equation]{Theorem}
\newtheorem*{conj*}{Conjecture}
\newtheorem*{theorem*}{Theorem}
\newtheorem{remark}[equation]{Remark}
\newtheorem{lemma}[equation]{Lemma}
\begin{document}

\title{An interpolation of the generalized duality formula for the Schur multiple zeta values to complex functions}
\author{Maki Nakasuji}
\address{Department of Information and Communication Science, Faculty of Science, Sophia
University, 7-1 Kioi-cho, Chiyoda-ku, Tokyo, 102-8554, Japan}
\email{nakasuji@sophia.ac.jp}
\author[Y. Ohno]{Yasuo Ohno}
\address{Mathematical Institute, Tohoku University, Sendai 980-8578, Japan}
\email{ohno.y@m.tohoku.ac.jp}
\author[W. Takeda]{Wataru Takeda}
\address{Department of Applied Mathematics, Tokyo University of Science,
1-3 Kagurazaka, Shinjuku-ku, Tokyo 162-8601, Japan.}
\email{w.takeda@rs.tus.ac.jp}
\subjclass[2020]{11M32, 05A19}
\keywords{Schur multiple zeta function, Ohno relation, Ohno sum, Ohno function}

\begin{nouppercase}
\maketitle
\end{nouppercase}
\begin{abstract}
One of the important research subjects in the study of multiple zeta functions is to clarify the linear relations and functional equations among them.
The Schur multiple zeta functions are a generalization of the multiple zeta functions of Euler-Zagier type.
Among many relations, the duality formula and its generalization are important families for both Euler-Zagier type and Schur type multiple zeta values. 
In this paper, following the method of previous works for multiple zeta values of Euler-Zagier type, we give an interpolation of the sums in the generalized duality formula, called Ohno relation, for Schur multiple zeta values. Moreover, we prove that the Ohno relation for Schur multiple zeta values is valid for complex numbers.
\end{abstract}
\section{Introduction}
For positive integers $r$, $k_1, k_2, \ldots, k_r$ with $k_r\ge2$,  
a multiple zeta value of Euler-Zagier type
is defined by
\[
 \zeta(k_1,\ldots,k_r)=\sum_{1\le n_1<\cdots< n_r}\frac{1}{n_1^{k_1}\cdots n_r^{k_r}},
 \]
 where the summation runs over all the size $r$ sets of ordered positive integers.
One can confirm that the above series converges for $r$-tuples $(k_1,\ldots,k_r)$ of positive integers with $k_r\ge2$. These $r$-tuples $(k_1,\ldots,k_r)$ are called admissible.
{Many $\mathbb Q$-linear relations among multiple zeta values are known. Especially, the duality formula and its generalization are important relations.}
To state the generalized duality formula, we denote a string $\underbrace{1,\ldots,1}_r$ of $1$'s by $\{1\}^r$.
Then, for an admissible index
\begin{equation}
\label{index}
{\bf k}=(\{1\}^{a_1-1}, b_1+1, \{1\}^{a_2-1}, b_2+1, \ldots, \{1\}^{a_m-1}, b_m+1)
\end{equation}
with $a_1, b_1, a_2, b_2, \cdots, a_m, b_m\in {\mathbb Z}_{\geq 1}$, the following index is called a {\it dual} index of ${\bf k}$:
\[{\bf k^{\dagger}}=(\{1\}^{b_m-1}, a_m+1, \{1\}^{b_{m-1}-1}, a_{m_1}+1, \ldots, \{1\}^{b_1-1}, a_1+1).
\]
The generalized duality formula, called Ohno relation in some literatures, can then be described as follows:
\begin{theorem}[The generalized duality formula. \cite{Oho}] \label{ohno}
 For any $\ell\in {\mathbb Z_{\geq 0}}$ and
any admissible index ${\bf k}=(k_1, \ldots, k_r)$, and its dual index ${\bf k^{\dagger}}=(k^{\dagger}_1., \ldots, k^{\dagger}_s)$,
\begin{equation}
\label{ohnosummzv11}
    \sum_{\substack{\varepsilon_1+\cdots+\varepsilon_r=\ell\\\varepsilon_i\geq 0}}\zeta(k_1+\varepsilon_1, \ldots, k_r+\varepsilon_r)=
\sum_{\substack{\varepsilon'_1+\cdots+\varepsilon'_s=\ell\\
\varepsilon_i'\geq 0}} \zeta(k^{\dagger}_1+\varepsilon'_1, \ldots, k^{\dagger}_s+\varepsilon'_s).
\end{equation}
\end{theorem}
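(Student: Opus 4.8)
The plan is to package the identities \eqref{ohnosummzv11}, one for each $\ell$, into a single generating-function identity in an auxiliary variable $s$, and then to prove that. Multiplying the left-hand side of \eqref{ohnosummzv11} by $s^{\ell}$, summing over $\ell\ge0$, and using that $\sum_{\varepsilon_1+\cdots+\varepsilon_r=\ell}n_1^{-\varepsilon_1}\cdots n_r^{-\varepsilon_r}$ is the complete homogeneous symmetric polynomial $h_{\ell}(n_1^{-1},\ldots,n_r^{-1})$, together with $\sum_{\ell\ge0}h_{\ell}(z_1,\ldots,z_r)s^{\ell}=\prod_i(1-z_is)^{-1}$, one finds that for $|s|<1$
\[
\mathcal{O}({\bf k};s):=\sum_{\ell\ge0}s^{\ell}\!\!\sum_{\substack{\varepsilon_1+\cdots+\varepsilon_r=\ell\\ \varepsilon_i\ge0}}\!\!\zeta(k_1+\varepsilon_1,\ldots,k_r+\varepsilon_r)=\sum_{0<n_1<\cdots<n_r}\ \prod_{i=1}^{r}\frac{1}{n_i^{k_i-1}(n_i-s)},
\]
a series converging absolutely for $|s|<1$ (by comparison with $\zeta({\bf k})$) and holomorphic near $s=0$, and the same holds for ${\bf k}^{\dagger}$. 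Hence \eqref{ohnosummzv11} for all $\ell$ is equivalent to the single identity $\mathcal{O}({\bf k};s)=\mathcal{O}({\bf k}^{\dagger};s)$.

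Next I would realize $\mathcal{O}({\bf k};s)$ as an iterated integral. Write $\eta_1\eta_2\cdots\eta_k=\omega_1\,\omega_0^{\,k_1-1}\,\omega_1\,\omega_0^{\,k_2-1}\cdots\omega_1\,\omega_0^{\,k_r-1}$ for the word of ${\bf k}$, where $\omega_0=dt/t$, $\omega_1=dt/(1-t)$, $k=k_1+\cdots+k_r$, so that $\zeta({\bf k})=\int_{0<t_1<\cdots<t_k<1}\eta_1(t_1)\cdots\eta_k(t_k)$ with the forms listed from the smallest to the largest variable. Passing from $k_j$ to $k_j+\varepsilon_j$ inserts $\varepsilon_j$ extra copies of $\omega_0$ immediately before the $(j{+}1)$-st $\omega_1$ (or at the top of the simplex when $j=r$); performing $\sum_{\varepsilon_j\ge0}s^{\varepsilon_j}$ over the corresponding new inner variables and using $\sum_{m\ge0}(\log(v/u))^{m}s^{m}/m!=(v/u)^{s}$ collapses each insertion to a single scalar weight, so that
\[
\mathcal{O}({\bf k};s)=\int_{0<t_1<\cdots<t_k<1}\eta_1(t_1)\cdots\eta_k(t_k)\ \prod_{j=1}^{r}\Big(\frac{t_{q_j+1}}{t_{q_j}}\Big)^{\!s},\qquad q_j:=k_1+\cdots+k_j,\quad t_{k+1}:=1.
\]
At $s=0$ the weight is $1$ and this is the usual integral for $\zeta({\bf k})$.

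The last step is the order-reversing, measure-preserving substitution $t_j\mapsto 1-t_{k+1-j}$. On the forms $\eta_1,\ldots,\eta_k$ this is exactly classical duality: it exchanges $\omega_0\leftrightarrow\omega_1$ and reverses the word, hence carries the word of ${\bf k}$ to the word of ${\bf k}^{\dagger}$ and, for $s=0$, immediately yields $\zeta({\bf k})=\zeta({\bf k}^{\dagger})$. The catch — the step I expect to be the main obstacle — is that the deformation weight is \emph{not} carried to the analogous weight $\prod_j(t_{q^{\dagger}_j+1}/t_{q^{\dagger}_j})^{s}$ of ${\bf k}^{\dagger}$: after the substitution it becomes $\prod_{j=1}^{r}\big((1-t_{k-q_j})/(1-t_{k+1-q_j})\big)^{s}$ (with $t_0:=0$), a visibly different integrand, so one ends up with two distinct-looking iterated integrals, both claimed to represent $\mathcal{O}({\bf k}^{\dagger};s)$, and reconciling them is the genuine content of the theorem. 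I would settle this by the transport (``connected sum'') method: introduce a connected double sum $Z({\bf a}\mid{\bf b};s)$ gluing a truncated multiple sum for ${\bf a}$ and one for ${\bf b}$ along a Beta-function-type connector, deformed by $s$, in the two largest summation variables; verify the two transport relations that move a single index across the connector (one for an entry $\ge2$, one for a run of $1$'s), which at $s=0$ are precisely the relations behind classical duality and for general $s$ require only tracking the parameter through $B(x,y)=\int_0^1u^{x-1}(1-u)^{y-1}\,du$ and its contiguous recursions; and then transport all of ${\bf k}$ across, turning $Z({\bf k}\mid\varnothing;s)$ into $Z(\varnothing\mid{\bf k}^{\dagger};s)$, the two boundary values being $\mathcal{O}({\bf k};s)$ and $\mathcal{O}({\bf k}^{\dagger};s)$. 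Alternatively one finishes as in \cite{Oho}, by a direct generating-function computation matching the two integrands above. Either way, comparing coefficients of $s^{\ell}$ recovers \eqref{ohnosummzv11} for every $\ell$.
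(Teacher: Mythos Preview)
The paper does not give its own proof of this theorem: Theorem~\ref{ohno} is quoted from \cite{Oho} as background in the introduction, with no argument supplied, so there is nothing in the present paper to compare your proposal against. The authors simply invoke the result and move on to its Schur-type generalization.

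As for your sketch on its own terms: the generating-function repackaging $\mathcal{O}({\bf k};s)=\sum_{0<n_1<\cdots<n_r}\prod_i n_i^{1-k_i}(n_i-s)^{-1}$ and the iterated-integral rewriting are both correct, and you rightly identify that the naive involution $t_j\mapsto 1-t_{k+1-j}$ does \emph{not} carry the $s$-weight to the $s$-weight of ${\bf k}^\dagger$, so duality at $s=0$ does not trivially extend. Your two proposed endgames --- the connected-sum/transport argument \`a la Seki--Yamamoto and Ohno's original hypergeometric/generating-function computation --- are both valid routes to the identity, but in your write-up they remain promissory: you assert that the transport relations hold ``tracking the parameter through $B(x,y)$ and its contiguous recursions'' without actually stating or checking those relations, and the alternative ``as in \cite{Oho}'' is a citation, not an argument. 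So what you have is a correct and well-informed plan, not yet a proof; the genuine work (verifying the two $s$-deformed transport identities, or carrying out Ohno's Landen-type computation) is still to be done.
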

In Theorem \ref{ohno}, when $\ell=0$, we obtain the duality formula for multiple zeta values of Euler-Zagier type.
We may write the left-hand side of \eqref{ohnosummzv11} as ${\mathcal O}({\bf k} : \ell)$ and call $\mathcal{O}$-sum, then \eqref{ohnosummzv11} can be written as \begin{equation}
\label{ohnosummzv1}
    \mathcal{O}({\bf k}:\ell)=\mathcal{O}({\bf k}^\dagger:\ell).
\end{equation}
In \cite{no}, the authors generalized Theorem \ref{ohno} to the Schur multiple zeta values under some conditions.
In the following, we review their setup:

For any partition $\lambda$, i.e., a non-increasing sequence $( \lambda_1, \ldots, \lambda_n)$ of { positive} integers,
we associate the Young diagram
$D_{\lambda}=\{(i, j)\in {\mathbb Z}^2 ~|~ 1\leq i\leq n, 1\leq j\leq \lambda_i\}$ depicted as a collection of square boxes with the $i$-th row having $\lambda_i$ boxes.
For a partition $\lambda$, a Young tableau $T=(t_{ij})$ of shape $\lambda$ over a set $X$ is obtained by filling the boxes of $D_{\lambda}$ with $t_{ij}\in X$.
 We denote by $T_\lambda(X)$ the set of all Young tableaux of shape $\lambda$ over $X$ and denote by $SSYT_{\lambda}$ the set of semi-standard Young tableaux {$(t_{ij})\in T_\lambda(\mathbb N)$ which satisfies} the condition of weakly increasing from left to right in each row $i$,
and strictly increasing from top to bottom in each column $j$.
{\color{blue}Let $\lambda=(\lambda_1, \ldots, \lambda_r)$ and $\mu=(\mu_1, \ldots, \mu_s)$ be two partitions such that $\lambda_i\geq \mu_i$ for all $i$ and $r\ge s$, and let $\delta=\lambda/\mu$ be a partition of skew shape. 
Then we define $D_\delta=D_\lambda\setminus D_\mu$ and sets of their fillings $T_\delta(X)$, $SSYT_\delta$ in the same way as above. 
Then, for a given set ${ \pmb k}=(k_{ij})\in T_{\delta}(\mathbb{Z})$ of a tableau index, 
{\it Schur multiple zeta values} of {shape} $\delta$ are defined as
\[
\zeta_{\delta}({ \pmb k})=\sum_{M\in SSYT_{\delta}}\frac{1}{M^{ \pmb k}},
\]
where $M^{ \pmb k}=\displaystyle{\prod_{(i, j)\in D_{\delta}}m_{ij}^{k_{ij}}}$ for $M=(m_{ij})\in SSYT_{\delta}$. 
The function $\zeta_\delta(\pmb k)$ absolutely converges in \[
 W_{\delta}
=
\left\{{\pmb k}=(k_{ij})\in T_{\delta}(\mathbb{Z})\,\left|\,
\begin{array}{l}
 \text{$k_{ij}\ge 1$ for all $(i,j)\in D_{\delta} \setminus C_{\delta}$} \\[3pt]
 \text{$k_{ij}\ge2$ for all $(i,j)\in C_{\delta}$}
\end{array}
\right.
\right\},
\]
where $C_\delta$ is the set of all corners of $\delta$.
Here, we say that $(i,j)\in D_{\delta}$ is a corner of $\delta$ if $(i+1, j)\notin D_{\delta}$ and $(i, j+1)\notin D_{\delta}$;
for example, if $\delta=(4,3,3,2)\setminus(3,2,1)$, $C_\delta=\{(1,4),(3,3),(4,2)\}$.
In this article, we assume that all tableau indices of $\zeta_\delta$ are elements of $W_\delta$.} 
Nakasuji and Ohno \cite{no} defined a tableau which is ``dual'' to ${\pmb k}\in T_{\delta}({\mathbb Z})$.
First, we denote a finer piece of index $\{1\}^{a-1}, b+1$ as $A(a,b)$ and call it an {\it admissible piece}. Then, if we write $A_i=A(a_i, b_i)$
and $A_i^{\dagger}=A(b_i, a_i)$,  
the above
admissible index ${\bf k}$ and its dual ${\bf k}^{\dagger}$ can be written in terms of admissible pieces:
\[{\bf k}=(A(a_1, b_1), A(a_2, b_2), \ldots, A(a_m, b_m))=(A_1, A_2, \ldots, A_m)
\]
and
\[{\bf k^{\dagger}}=(A(b_m, a_m), A(b_{m-1}, a_{m-1}), \ldots, A(b_1, a_1))=(A_m^{\dagger}, A_{m-1}^{\dagger}, \ldots, A_1^{\dagger}).
\]
{\color{blue}We now write ${\pmb k}\in T_{\delta}(\mathbb Z)$ as
\begin{equation}\label{notationk}
{\pmb k}={\pmb k}^{\mathrm{col}}_1\cdots {\pmb k}^{\mathrm{col}}_{\lambda_1},
\end{equation}
where ${\pmb k}^{\mathrm{col}}_{j}$ is the $j$-th column tableau of ${\pmb k}$. For example, when $\lambda=(3, 2, 1)$ and
\[{\pmb k}=
\ytableausetup{boxsize=normal,aligntableaux=center}  
\begin{ytableau}
 k_{11}   & k_{12} & k_{13}\\
 k_{21}  & k_{22}\\
 k_{31}
\end{ytableau},
\]
then 
${\pmb k}^{\mathrm{col}}_{1}=
\begin{ytableau}
 k_{11} \\
 k_{21} \\
 k_{31}
\end{ytableau}
$,
${\pmb k}^{\mathrm{col}}_{2}=
\begin{ytableau}
 k_{12} \\
 k_{22} 
\end{ytableau}
$
and
${\pmb k}^{\mathrm{col}}_{3}=
\begin{ytableau}
 k_{13} 
\end{ytableau}
$.}
Let $T^{\mathrm{diag}}_\delta({\mathbb Z})=\{{\pmb k}\in T_{\delta}({\mathbb Z}) ~|~ k_{ij}=k_{pq} \; {\mathrm{if}}\; j-i=q-p\}$. 
 Let $I_{\delta}^D$ be the set of elements in $T_{\delta}^{\mathrm{diag}}({\mathbb Z})$ consisting of
admissible pieces such that the right side of the top element in each column is not $1$. 
For ${\pmb k}\in I_{\delta}^D$, in terms of admissible pieces, 
the row that has the topmost component is identified as the first row. 
In terms of admissible pieces, we can write as $A_{ij}$ the component in the $i$-th row and $j$-th column.
Note that
the component in the upper-right corner in the $j$-th row is $A_{1j}$
and that $A_{ij}=A_{k\ell}$ if $j-i=\ell-k$ when they are not empty. 
Further, we note that, in terms of tableau,
the top element in $A_{ij}$ and the bottom element in $A_{i(j+1)}$ are located side by side.

Let $\delta=\lambda/\mu$ be a partition of skew shape. For ${\pmb k}=(k_{ij})\in W_\delta({\mathbb Z}_{\geq 1})$,
${\pmb \varepsilon}=(\varepsilon_{ij})\in T_{\delta}({\mathbb Z}_{\geq 0})$, and $\ell\in {\mathbb Z_{\geq 0}}$, we denote by 
\[{\mathcal O}({\pmb k}: \ell)=\sum_{\substack{|{\pmb \varepsilon}|=\ell}} \zeta_{\delta} ({\pmb k}+{\pmb \varepsilon}),
\]
where ${\pmb k}+{\pmb \varepsilon}=(k_{ij}+\varepsilon_{ij})$ and $|{\pmb \varepsilon}|=\sum_{(i,j)\in D_\delta} \varepsilon_{ij}$.
For ${\pmb k}_{j}^{\mathrm{col}}\in I_{(1^n)}^D$ ($j_1\leq j\leq j_r$)'s,  we define
\[
{\mathcal O}({\pmb k}_{j_1}^{\mathrm{col}}\cdots{\pmb k}_{j_r}^{\mathrm{col}} : \ell) 
=\sum_{\ell_1+\ell_2+\cdots+\ell_r=\ell}
{\mathcal O}({\pmb k}_{j_1}^{\mathrm{col}} : \ell_1) \cdots {\mathcal O}({\pmb k}_{j_r}^{\mathrm{col}} : \ell_r).\]
\begin{theorem}[\cite{no}]\label{Thmohno}
Let $\lambda$ and $\mu$ be partitions and let $\delta=\lambda/\mu$. If ${\pmb k}^{\dagger}$ is the dual tableau of ${\pmb k} \in I_{\delta}^{\rm D}$ and $\ell\in {\mathbb Z_{\geq 0}}$, we have
\begin{equation}
\label{ohnosummzv}
{\mathcal O}({\pmb k}: \ell)={\mathcal O}({\pmb k}^{\dagger}: \ell).    
\end{equation}
\end{theorem}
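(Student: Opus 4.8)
The plan is to derive Theorem~\ref{Thmohno} from the classical generalized duality formula, Theorem~\ref{ohno}, by exploiting the column decomposition ${\pmb k}={\pmb k}^{\mathrm{col}}_1\cdots{\pmb k}^{\mathrm{col}}_{\lambda_1}$. The key elementary observation is that a Schur multiple zeta value of a single-column shape $(1^n)$ is nothing but an ordinary Euler--Zagier multiple zeta value: if ${\pmb k}^{\mathrm{col}}_j$ has entries $(c_1,\dots,c_n)$ from top to bottom, then $\zeta_{(1^n)}({\pmb k}^{\mathrm{col}}_j)=\zeta(c_1,\dots,c_n)$. Since ${\pmb k}^{\mathrm{col}}_j$ belongs to $I_{(1^n)}^{\rm D}$, this index is a vertical concatenation of admissible pieces, hence admissible, and its dual $({\pmb k}^{\mathrm{col}}_j)^{\dagger}$ --- the reversed concatenation of the pieces $A(a,b)\mapsto A(b,a)$ --- is admissible as well, so Theorem~\ref{ohno} applies column by column and yields $\mathcal O({\pmb k}^{\mathrm{col}}_j:\ell_j)=\mathcal O(({\pmb k}^{\mathrm{col}}_j)^{\dagger}:\ell_j)$ for every $\ell_j\ge 0$. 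It therefore suffices to express both sides of \eqref{ohnosummzv} in terms of these column Ohno sums and to match the two expressions.

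The heart of the matter is a reduction lemma stating that, for ${\pmb k}\in I_{\delta}^{\rm D}$, the quantity $\mathcal O({\pmb k}:\ell)$ can be written as a fixed $\mathbb Z$-linear combination of convolutions $\sum_{\ell_1+\dots+\ell_{\lambda_1}=\ell}\prod_j\mathcal O({\pmb k}^{\mathrm{col}}_{\sigma(j)}:\ell_j)$ of single-column Ohno sums, the combination depending only on the skew shape $\delta$. This is the Jacobi--Trudi type (equivalently, Gessel--Viennot lattice-path) structure of Schur multiple zeta values, which is available here precisely because ${\pmb k}$ is constant along diagonals; the defining conditions of $I_{\delta}^{\rm D}$ --- diagonal constancy, the admissible-piece tiling, and the requirement that the box to the right of the top of each piece is not $1$ --- guarantee that every column index occurring in the expansion is admissible. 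Granting this, one substitutes $\mathcal O({\pmb k}^{\mathrm{col}}_j:\ell_j)=\mathcal O(({\pmb k}^{\mathrm{col}}_j)^{\dagger}:\ell_j)$ term by term and is left with the same $\mathbb Z$-linear combination of convolutions, now of the dualized column Ohno sums $\mathcal O(({\pmb k}^{\mathrm{col}}_j)^{\dagger}:\cdot)$. The final step is to recognize this as the reduction-lemma expansion of $\mathcal O({\pmb k}^{\dagger}:\ell)$ attached to the dual skew shape $\delta^{\dagger}$: passing to the dual tableau reflects the diagram and replaces each column by its classical dual, and one checks --- using the very bookkeeping of admissible pieces $A_i\leftrightarrow A_i^{\dagger}$ that already governs the Euler--Zagier duality --- that the shape-dependent coefficients transform accordingly. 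Combining the two expansions with the columnwise Ohno relation then gives $\mathcal O({\pmb k}:\ell)=\mathcal O({\pmb k}^{\dagger}:\ell)$.

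The main obstacle is the reduction lemma itself. The issue is that $\mathcal O({\pmb k}:\ell)=\sum_{|{\pmb\varepsilon}|=\ell}\zeta_{\delta}({\pmb k}+{\pmb\varepsilon})$ raises all the exponents at once, and a typical perturbed tableau ${\pmb k}+{\pmb\varepsilon}$ is no longer constant along diagonals, so no Jacobi--Trudi type identity is available for the individual summands; one must instead prove that the whole sum over $\{{\pmb\varepsilon}:|{\pmb\varepsilon}|=\ell\}$ reassembles into the column-convolution form. This is exactly where the structure of $I_{\delta}^{\rm D}$ is indispensable: the semistandard conditions couple consecutive columns, and only the diagonal constancy together with the corner and piece conditions force those couplings to dissolve after the summation. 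I would carry this out by induction on the number of columns $\lambda_1$, peeling off one (say, the rightmost) column at a time, splitting $\ell$ between the peeled column and the rest, and verifying that the peeled piece always carries an admissible column index so the induction closes. A more geometric alternative would be to build an iterated-integral representation of $\mathcal O({\pmb k}:\ell)$ adapted to $\delta$ and realize the duality as the substitution $t\mapsto 1-t$, but fitting the Ohno-type sum into such a representation for an arbitrary skew shape seems to demand essentially the same combinatorial input.
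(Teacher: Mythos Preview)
This theorem is not proved in the present paper: it is quoted verbatim from \cite{no} (Nakasuji--Ohno), and the paper uses it as a black box in the proof of Theorem~\ref{dualityschur}. Consequently there is no ``paper's own proof'' here to compare your proposal against.

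That said, a word about your strategy relative to the definitions the paper records. The paper gives \emph{two} definitions of $\mathcal O$: first $\mathcal O({\pmb k}:\ell)=\sum_{|{\pmb\varepsilon}|=\ell}\zeta_\delta({\pmb k}+{\pmb\varepsilon})$ for general ${\pmb k}\in W_\delta$, and then, for a concatenation ${\pmb k}^{\mathrm{col}}_{j_1}\cdots{\pmb k}^{\mathrm{col}}_{j_r}$ of columns in $I_{(1^n)}^{\rm D}$, the convolution $\sum_{\ell_1+\cdots+\ell_r=\ell}\prod_p\mathcal O({\pmb k}^{\mathrm{col}}_{j_p}:\ell_p)$. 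Your ``reduction lemma'' is precisely the assertion that, for ${\pmb k}\in I_\delta^{\rm D}$, these two readings of $\mathcal O({\pmb k}:\ell)$ agree (up to the Jacobi--Trudi alternating sum over column permutations). You are right that this is the nontrivial ingredient and that the obstacle is genuine: a generic ${\pmb k}+{\pmb\varepsilon}$ is not diagonally constant, so no termwise Jacobi--Trudi identity applies, and the diagonal/admissible-piece hypotheses of $I_\delta^{\rm D}$ really are what make the total sum reorganize. Once that reduction is established, the columnwise application of Theorem~\ref{ohno} and the matching of the shape-dependent coefficients under ${\pmb k}\mapsto{\pmb k}^\dagger$ are exactly the right finishing moves. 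Your sketch is thus pointed in the correct direction, but as you yourself flag, the inductive peeling argument for the reduction step is asserted rather than carried out; filling it in is where the actual work in \cite{no} lies.
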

We may regard \eqref{ohnosummzv} as a generalization of \eqref{ohnosummzv1}.
Identities \eqref{ohnosummzv1} and \eqref{ohnosummzv} are based on the addition of positive integers. On the other hand, in \cite{hmo}, Hirose, Murahara and Onozuka gave an interpolation of \eqref{ohnosummzv1} to complex functions.
For an admissible index ${\bf k}=(k_1,\ldots,k_r)$ and $s\in\mathbb{C}$ with $\Re(s)>-1$, they defined the function $I_{{\bf k}}(s)$, called Ohno function in \cite{ko}, by
\begin{align}\label{ohnofunctionzeta}
 I_{{\bf k}}(s)=\sum_{i=1}^{r}\sum_{0<n_1<\cdots<n_r} \frac{1}{n_1^{k_1}\cdots n_r^{k_r}}
  \cdot \frac{1}{n_i^{s}} \prod_{j\ne i} \frac{n_j}{ n_j-n_i }.
\end{align}
In \cite[Lemma 2.2]{hmo}, it is proved that if $s$ is a non-negative integer $m\in\mathbb{Z}_{\ge 0}$, the function $I_{{\bf k}}(s)$ is the same as $\mathcal{O}$-sum, that is,
\begin{align*}
I_{{\bf k}}(m)&=\sum_{\substack{ \varepsilon_1+\cdots+\varepsilon_r=m \\ \varepsilon_i\ge0\,(1\le i\le r) }}\zeta (k_1+\varepsilon_1,\ldots,k_r+\varepsilon_r)\\
&=\mathcal{O}({\bf k}:m).
\end{align*}
Thus, by Theorem \ref{ohno}, we have $I_{{\bf k}}(m)=I_{{\bf k}^\dagger}(m)$.
More generally, they gave an interpolation of the Ohno relation to complex numbers.
\begin{theorem}[\cite{hmo}]
  \label{interpolation}
 For an admissible index ${\bf k}$ and $s\in\mathbb{C}$, we have
 \begin{align*}
  I_{{\bf k}}(s)=I_{{\bf k}^\dagger}(s).
 \end{align*}
\end{theorem}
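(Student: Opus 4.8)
The natural approach is to deduce the identity for complex $s$ from its already established values at non-negative integers by an analytic rigidity argument of Carlson type. Put $f(s)=I_{{\bf k}}(s)-I_{{\bf k}^\dagger}(s)$. Since ${\bf k}^\dagger$ is again an admissible index, both $I_{{\bf k}}$ and $I_{{\bf k}^\dagger}$ are holomorphic on the half-plane $\Re(s)>-1$, hence so is $f$; in particular $f$ is holomorphic on an open neighbourhood of the closed half-plane $\Re(s)\ge 0$. Moreover $f$ vanishes at every non-negative integer: by \cite[Lemma 2.2]{hmo} one has $I_{{\bf k}}(m)=\mathcal{O}({\bf k}:m)$ and $I_{{\bf k}^\dagger}(m)=\mathcal{O}({\bf k}^\dagger:m)$ for $m\in\mathbb{Z}_{\ge 0}$, while Theorem \ref{ohno} gives $\mathcal{O}({\bf k}:m)=\mathcal{O}({\bf k}^\dagger:m)$, so $f(m)=0$.

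The second ingredient is a growth bound. The variable $s$ enters \eqref{ohnofunctionzeta} only through the factors $n_i^{-s}$, whose modulus $n_i^{-\Re(s)}$ is independent of $\Im(s)$ and, since each $n_i\ge 1$, non-increasing in $\Re(s)$. Writing $\Phi(\sigma)$ for the series obtained from \eqref{ohnofunctionzeta} by replacing each summand by its absolute value at the real point $s=\sigma$, one gets
\[
|I_{{\bf k}}(s)|\;\le\;\Phi(\Re(s))\;\le\;\Phi(0)\;<\;\infty\qquad(\Re(s)\ge 0),
\]
the finiteness of $\Phi(0)$ being precisely the absolute convergence of \eqref{ohnofunctionzeta} at the point $s=0\in(-1,\infty)$. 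I would verify this absolute (and locally uniform) convergence on $\Re(s)>-1$ — which in any case underlies the very definition of $I_{{\bf k}}$ in \cite{hmo} — by the standard partial-fraction estimate taming the products $\prod_{j\ne i}n_j/(n_j-n_i)$ against the denominators $n_1^{k_1}\cdots n_r^{k_r}$. The same bound applies to $I_{{\bf k}^\dagger}$, so $f$ is bounded on $\Re(s)\ge 0$.

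It remains to invoke rigidity: a function holomorphic and bounded on $\Re(s)\ge 0$ that vanishes at $0,1,2,\ldots$ must be identically zero. Indeed, transporting to the unit disc by a conformal map of the right half-plane, the images of the positive integers accumulate at a boundary point and violate the Blaschke summability condition, so a bounded holomorphic function with those zeros is necessarily $0$; this is the exponential-type-$0$ case of Carlson's theorem. Applying it to $f$ gives $f\equiv 0$ on $\Re(s)\ge 0$, and the identity theorem on the connected domain $\{\Re(s)>-1\}$ then yields $I_{{\bf k}}(s)=I_{{\bf k}^\dagger}(s)$ throughout, which is the assertion (and the identity propagates to any further region to which both sides admit analytic continuation).

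The step I expect to require genuine work is the growth bound of the second paragraph: one must establish the absolute and locally uniform convergence of \eqref{ohnofunctionzeta} throughout $\Re(s)>-1$, uniformly in $\Im(s)$, i.e. show that the potentially large factors $\prod_{j\ne i}n_j/(n_j-n_i)$ are controlled by the zeta denominators. Once that estimate is in hand, the vanishing at integers and the Carlson-type conclusion are formal.
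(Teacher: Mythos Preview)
Your argument is correct in outline, but it takes a different rigidity route from the one the paper uses. Both approaches begin identically: verify $I_{{\bf k}}(m)=I_{{\bf k}^\dagger}(m)$ at every non-negative integer $m$ via the Ohno relation, then promote this discrete family of identities to all of $\mathbb{C}$ by an analytic uniqueness principle. You invoke Carlson's theorem (equivalently the failure of the Blaschke condition for the images of $\mathbb{Z}_{\ge0}$ in the disc), which obliges you to prove a uniform bound for $I_{{\bf k}}$ on the closed half-plane $\Re(s)\ge 0$; you correctly flag this absolute-convergence estimate as the substantive step. The paper (in its proof of Theorem~\ref{dualityschur}, of which the present statement is the special case $\lambda=(\{1\}^r)$, $\mu=\emptyset$) instead observes that $I_{{\bf k}}(s)$ is a genuine Dirichlet series in $s$: grouping the terms of \eqref{ohnofunctionzeta} according to the value $n=n_i$ writes it as $\sum_n c_n n^{-s}$ with $s$-independent coefficients $c_n$. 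One then appeals directly to the uniqueness theorem for Dirichlet series \cite[Theorem~11.3]{ap}: two absolutely convergent Dirichlet series that agree along a real sequence $s_k\to+\infty$ have identical coefficients, hence coincide as functions. This sidesteps any separate boundedness argument; the only analytic input is the absolute convergence of \eqref{ohnofunctionzeta} on $\Re(s)>-1$, which both approaches need anyway. So your proof works, but the Dirichlet-series route is more economical and absorbs your flagged ``genuine work'' into the structural observation that the function is a Dirichlet series.
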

Subsequently, Kamano and Onozuka introduced two kinds of integral representations of (\ref{ohnofunctionzeta}):
\begin{theorem}[\cite{ko}]
For any admissible index ${\bf k}$ represented as (\ref{index})
  and $s\in\mathbb{C}$ with $\Re(s)>-1$,  we have
\begin{align*}
I_{{\bf k}}(s) &=\dfrac{1}{(a_1-1)!(b_1-1)!  \cdots (a_m-1)! (b_m-1)! \Gamma (s+1) }\\
    &\times \int_{0<t_1<\cdots < t_{2m}<1}  \dfrac{dt_1 \cdots dt_{2m}}{(1-t_1)t_2  \cdots (1-t_{2m-1})t_{2m} }\left( \log  \dfrac{t_2 \cdots t_{2m} }{t_1 \cdots t_{2m-1}}  \right)^{s}\\ 
	&\times \left(  \log  \dfrac{1-t_1}{1-t_2}   \right)^{a_1-1}
	\left(  \log \dfrac{t_3}{t_2}   \right)^{b_1-1}
	\cdots
	\left( \log  \dfrac{1-t_{2m-1}}{1-t_{2m}}   \right)^{a_{m}-1}
	\left( \log  \dfrac{1}{t_{2m}}   \right)^{b_{m}-1}.
\end{align*}
\end{theorem}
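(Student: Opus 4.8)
The plan is to show that both sides are holomorphic for $\Re(s)>-1$, that at every $\ell\in\mathbb Z_{\ge0}$ they both equal the Ohno sum $\mathcal O(\mathbf k:\ell)$, and that they satisfy the growth bound needed for Carlson's theorem, which then forces them to coincide. Write $J_{\mathbf k}(s)$ for the right-hand side and let $\Omega$ denote the part of its integrand independent of $s$, so that $J_{\mathbf k}(s)=\frac{1}{\Gamma(s+1)}\int_{0<t_1<\cdots<t_{2m}<1}(\log V)^{s}\,\Omega$ with $V=\frac{t_2t_4\cdots t_{2m}}{t_1t_3\cdots t_{2m-1}}$. Since $t_1<\cdots<t_{2m}$ forces $V>1$ on the open simplex, $(\log V)^{s}$ is well defined and holomorphic in $s$; the only possibly singular factors of the integrand near the faces $t_i=t_{i+1}$ and near $t_1=0,\ t_{2m}=1$ are powers of logarithms, and $\int_0(\log\tfrac1u)^{\Re(s)}\,du<\infty$ for every real $\Re(s)$ (while the behaviour near $t_1=t_2$ for $m=1$ is controlled by the hypothesis $\Re(s)>-1$), so the integral converges absolutely and locally uniformly and $J_{\mathbf k}$ is holomorphic on $\Re(s)>-1$; by \cite{hmo}, $I_{\mathbf k}$ is holomorphic there as well.

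The heart of the proof is that $J_{\mathbf k}(\ell)=\mathcal O(\mathbf k:\ell)$ for every $\ell\in\mathbb Z_{\ge0}$; combined with $I_{\mathbf k}(\ell)=\mathcal O(\mathbf k:\ell)$ (\cite[Lemma~2.2]{hmo}) this shows the two sides agree at all non-negative integers. I would first ``un-collapse'' the factors $\frac{1}{(a_i-1)!}\bigl(\log\tfrac{1-t_{2i-1}}{1-t_{2i}}\bigr)^{a_i-1}$ and $\frac{1}{(b_i-1)!}\bigl(\log\tfrac{t_{2i+1}}{t_{2i}}\bigr)^{b_i-1}$ (with $t_{2m+1}:=1$) as iterated integrals of $\tfrac{du}{1-u}$ over $(t_{2i-1},t_{2i})$ and of $\tfrac{du}{u}$ over $(t_{2i},t_{2i+1})$; the inserted variables do not occur in $V$, so this rewrites $J_{\mathbf k}(s)$ as $\frac{1}{\Gamma(s+1)}\int_{0<u_1<\cdots<u_w<1}(\log V)^{s}\,\omega_1^{a_1}\omega_0^{b_1}\cdots\omega_1^{a_m}\omega_0^{b_m}$, i.e.\ the classical weight-$w$ iterated-integral representation of $\zeta(\mathbf k)$ (with $w=\sum_i(a_i+b_i)$, $\omega_0=\tfrac{du}{u}$, $\omega_1=\tfrac{du}{1-u}$) decorated by the weight $(\log V)^{s}$, where $V$ is now the ratio of the ``$\omega_0$-block-head'' variables to the ``$\omega_1$-block-head'' variables among $u_1<\cdots<u_w$. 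Passing to generating functions, $\sum_{\ell\ge0}\tfrac{X^{\ell}}{\ell!}(\log V)^{\ell}=V^{X}$ gives $\sum_{\ell\ge0}J_{\mathbf k}(\ell)X^{\ell}=\int_{0<u_1<\cdots<u_w<1}V^{X}\,\omega_1^{a_1}\omega_0^{b_1}\cdots\omega_1^{a_m}\omega_0^{b_m}$, which I would evaluate by expanding every $\omega_1=\tfrac{du}{1-u}=\sum_{c\ge0}u^{c}\,du$ into a geometric series and using $\int_{0<u_1<\cdots<u_w<1}\prod_{p}u_p^{c_p}\,du_p=\prod_{p=1}^{w}\bigl(p+c_1+\cdots+c_p\bigr)^{-1}$. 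The weight $V^{X}$ shifts the exponent of each $\omega_1$-block-head by $-X$ and of each $\omega_0$-block-head by $+X$; propagating these shifts through the partial sums, one obtains a denominator factor of the form $(\cdot)-X$ from each position lying inside an $\omega_1$-block, i.e.\ $a_i$ such factors per admissible piece $A(a_i,b_i)$. After the standard reorganization of the geometric summation indices into variables $n_1<\cdots<n_r$ ($r=\sum_i a_i$), these $X$-denominators assemble into $\prod_{i=1}^{r}\tfrac{n_i}{n_i-X}$ and the $X$-free part into $\prod_{p}n_p^{-k_p}$, whence $\sum_{\ell\ge0}J_{\mathbf k}(\ell)X^{\ell}=\sum_{0<n_1<\cdots<n_r}\prod_{p}n_p^{-k_p}\prod_{i=1}^{r}\tfrac{1}{1-X/n_i}$. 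Since $\sum_{\varepsilon_1+\cdots+\varepsilon_r=\ell,\ \varepsilon_p\ge0}\prod_p n_p^{-\varepsilon_p}=h_{\ell}(1/n_1,\dots,1/n_r)$ (the complete homogeneous symmetric polynomial of degree $\ell$) and $\sum_{\ell\ge0}h_{\ell}(x)X^{\ell}=\prod_i(1-x_iX)^{-1}$, the right-hand side is $\sum_{\ell\ge0}\bigl(\sum_{0<n_1<\cdots<n_r}\prod_p n_p^{-k_p}\,h_{\ell}(1/n_1,\dots,1/n_r)\bigr)X^{\ell}=\sum_{\ell\ge0}\mathcal O(\mathbf k:\ell)X^{\ell}$ by the definition of $\mathcal O$, and comparing coefficients of $X^{\ell}$ gives $J_{\mathbf k}(\ell)=\mathcal O(\mathbf k:\ell)$.

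Finally I would check the hypotheses of Carlson's theorem. On $\Re(s)\ge-1+\delta$, Stirling's formula gives $|1/\Gamma(s+1)|\ll e^{(\pi/2)|\Im(s)|}$, while $|(\log V)^{s}|=(\log V)^{\Re(s)}$ is independent of $\Im(s)$; hence $J_{\mathbf k}$ — and $I_{\mathbf k}$, by the estimates in \cite{hmo} — is holomorphic on $\Re(s)>-1$ and of exponential type there with imaginary-axis growth rate at most $\pi/2<\pi$. As $J_{\mathbf k}-I_{\mathbf k}$ is such a function and vanishes at every $\ell\in\mathbb Z_{\ge0}$, Carlson's theorem forces $J_{\mathbf k}\equiv I_{\mathbf k}$ on $\Re(s)\ge0$, hence on $\Re(s)>-1$ by analytic continuation. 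I expect the combinatorial core of the second paragraph to be the main obstacle: making the reorganization into $n_1<\cdots<n_r$ precise and confirming that the weight $V^{X}$ contributes exactly $\prod_i(1-X/n_i)^{-1}$ — equivalently, that decorating the iterated-integral representation of $\zeta(\mathbf k)$ by $(\log V)^{\ell}$ reproduces precisely the weight $h_{\ell}(1/n_1,\dots,1/n_r)=\sum_i n_i^{-\ell}\prod_{j\ne i}\tfrac{n_j}{n_j-n_i}$; the un-collapsing, the convergence verifications, and the Carlson step are routine.
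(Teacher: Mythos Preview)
The paper does not prove this statement: it is quoted verbatim from Kamano--Onozuka \cite{ko} as background, with no proof supplied. So there is no ``paper's own proof'' to compare against.

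That said, your strategy is reasonable and almost certainly \emph{not} what \cite{ko} does. The natural proof (and, I would guess, the one in \cite{ko}) is a direct computation: un-collapse the log powers into the full iterated integral $\int \omega_1^{a_1}\omega_0^{b_1}\cdots$, expand each $\omega_1$ as a geometric series, and integrate termwise with the extra factor $(\log V)^s/\Gamma(s+1)$ in place; the resulting Dirichlet series is exactly the defining series (\ref{ohnofunctionzeta}) of $I_{\mathbf k}(s)$, for all $s$ with $\Re(s)>-1$ at once. Your route instead matches the two sides only at $s\in\mathbb Z_{\ge0}$ and then invokes Carlson. This works, but it is strictly more roundabout: the generating-function computation you sketch in the second paragraph (weighting by $V^X$, propagating the $\pm X$ shifts through the partial sums, and reorganizing into $\prod_i(1-X/n_i)^{-1}$) is essentially the same calculation one would do directly for complex $s$, except you are doing it coefficient-by-coefficient in $X$ and then appealing to an identity theorem instead of keeping $s$ complex throughout. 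The Carlson step and the growth estimates are extra overhead that a direct argument avoids.

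On correctness: the outline is sound. The convergence discussion is a bit thin (the set $\{\log V=0\}$ is the corner $t_{2i-1}=t_{2i}$ for all $i$, of codimension $m$, so for $m\ge2$ the integral actually converges for $\Re(s)>-m$; only $m=1$ genuinely needs $\Re(s)>-1$), and the growth bound for $I_{\mathbf k}$ is easy from its series definition since $|n_i^{-s}|=n_i^{-\Re(s)}$ is independent of $\Im(s)$, so $I_{\mathbf k}$ is in fact bounded on vertical strips. The combinatorial core you flag --- that the $V^X$ weight contributes exactly $\prod_i(1-X/n_i)^{-1}$ --- is the real work, and your sketch is correct but would need to be written out carefully; once you do that, you will see that the same computation with $s$ in place of the formal $X$ gives the identity directly, making Carlson unnecessary.
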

\begin{theorem}[\cite{ko}]
\label{integral}
For any admissible index ${\bf k}=(k_1,\ldots,k_r)$ and $s\in\mathbb{C}$ with $\max_{1\leq j\leq r}\{r-2j+2-(k_j+\cdots+k_r)\}<\Re(s)<0$,  we have
\[
I_{{\bf k}}(s)=-\frac{\sin(\pi s)}{\pi}\sum_{0<n_1<\cdots<n_r}\frac{1}{n_1^{k_1-1}\cdots n_r^{k_r-1}}\int_0^\infty\frac{w^{-s-1}}{(w+n_1)\cdots(w+n_r)}dw.
\]
\end{theorem}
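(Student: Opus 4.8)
The plan is to prove the identity first on the strip $-1<\Re(s)<0$, where every series and integral below converges in the naive sense, and then to propagate it to the full range by analytic continuation.

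The key input is the beta-integral evaluation
\[
\int_0^\infty\frac{w^{-s-1}}{w+n}\,dw=-\frac{\pi}{\sin(\pi s)}\,n^{-s-1}\qquad(-1<\Re(s)<0,\ n\in\mathbb{Z}_{\ge 1}),
\]
which I would get from $\int_0^\infty t^{p-1}(1+t)^{-1}\,dt=\pi/\sin(\pi p)$ via the substitution $w=nt$ with $p=-s$; equivalently, $n^{-s}=-\frac{\sin(\pi s)}{\pi}\int_0^\infty n\,w^{-s-1}(w+n)^{-1}\,dw$. Substituting this for the factor $n_i^{-s}$ in \eqref{ohnofunctionzeta}, using that each of the series there converges for $\Re(s)>-1$ to justify the rearrangements, and pulling the \emph{finite} sum $\sum_{i=1}^{r}$ inside the integral leaves the kernel
\begin{align*}
\sum_{i=1}^{r}\frac{n_i}{w+n_i}\prod_{j\ne i}\frac{n_j}{n_j-n_i}
&=(n_1\cdots n_r)\sum_{i=1}^{r}\frac{1}{(w+n_i)\prod_{j\ne i}(n_j-n_i)}\\
&=\frac{n_1\cdots n_r}{(w+n_1)\cdots(w+n_r)},
\end{align*}
the last equality being the partial-fraction expansion of $\bigl(\prod_{j=1}^{r}(x+n_j)\bigr)^{-1}$ at $x=w$. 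Absorbing the factor $n_1\cdots n_r$ into the powers $n_j^{k_j}$ turns them into $n_j^{k_j-1}$, and this is exactly the asserted right-hand side, so far for $-1<\Re(s)<0$.

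To reach the full range I would argue that both sides are holomorphic on
\[
S:=\Bigl\{s\in\mathbb{C}\ :\ \max_{1\le j\le r}\bigl\{r-2j+2-(k_j+\cdots+k_r)\bigr\}<\Re(s)<0\Bigr\}.
\]
The left-hand side is holomorphic for $\Re(s)>-1$ by the construction of $I_{{\bf k}}$, and since $k_j+\cdots+k_r\ge r-j+2$ (there are $r-j+1$ entries, each $\ge 1$, with $k_r\ge2$), the $j$-th quantity in the maximum is $\le -j\le -1$; hence $-1<\Re(s)<0$ is a nonempty open subset of $S$ on which equality has already been established. So the identity theorem finishes the argument once the right-hand double series is shown to converge locally uniformly on $S$, and this simultaneously provides the holomorphic continuation of $I_{{\bf k}}$ to $S$.

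The convergence estimate is the technical heart, and I expect it to be the main obstacle. For fixed $n_1<\cdots<n_r$, the plan is to bound $\bigl|\int_0^\infty w^{-s-1}\prod_{j=1}^{r}(w+n_j)^{-1}\,dw\bigr|$ by cutting $(0,\infty)$ at $w=n_r$ and, on each piece, discarding an appropriate number of factors $w+n_\nu$ in favour of the lower bounds $n_\nu$ (near $0$) or $w$ (near $\infty$); this reduces everything to elementary integrals $\int w^{-\sigma-1-t}\,dw$ with $\sigma=\Re(s)$ and produces a bound of the form $\sum_{t}C_t(\sigma)\,n_r^{-\sigma-t}(n_1\cdots n_{r-t})^{-1}$ with $C_t$ locally bounded in $\sigma$. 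Multiplying by $\prod_{j}n_j^{-(k_j-1)}$ and summing over $n_1<\cdots<n_r$ — estimating the lower indices via $\#\{0<n_1<\cdots<n_{j-1}<n_j\}\asymp n_j^{j-1}$ — one should find that convergence holds precisely when $\Re(s)>r-2j+2-(k_j+\cdots+k_r)$ for every $j$, that is, exactly on $S$, with the uniformity on compacta falling out of the same bounds. The remaining ingredients — the beta integral and the partial fractions — are routine bookkeeping.
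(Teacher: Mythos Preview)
The paper does not prove this theorem; it is quoted from \cite{ko} as background, so there is no in-paper argument to compare against. That said, your approach is sound and in fact rests on exactly the two ingredients the paper later deploys for the Schur generalisation: the beta-integral evaluation (the paper's Lemma~\ref{betas} in the case $r=1$) and the partial-fraction decomposition (the paper's Lemma~\ref{partial} with all $r_\alpha=1$). Your derivation on $-1<\Re(s)<0$ is correct: substituting the integral representation for $n_i^{-s}$ termwise, pulling the finite $i$-sum through, and recognising
\[
\sum_{i=1}^{r}\frac{n_i}{w+n_i}\prod_{j\ne i}\frac{n_j}{n_j-n_i}=\frac{n_1\cdots n_r}{(w+n_1)\cdots(w+n_r)}
\]
as the partial-fraction expansion of $\prod_j(w+n_j)^{-1}$ recovers the right-hand side exactly. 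Your observation that each term $r-2j+2-(k_j+\cdots+k_r)$ in the maximum is at most $-1$ (since $k_j+\cdots+k_r\ge r-j+2$ for admissible $\mathbf{k}$), so that the strip $-1<\Re(s)<0$ is nonempty and contained in $S$, is also correct and is the right way to set up the analytic-continuation step.

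The only place your write-up is genuinely a sketch is the convergence estimate on the wider strip $S$, which you flag yourself. The plan you describe---split the $w$-integral at $w=n_r$, bound factors $(w+n_\nu)^{-1}$ by $n_\nu^{-1}$ near $0$ and by $w^{-1}$ near $\infty$, reduce to elementary integrals, and then estimate the resulting multiple Dirichlet sums by counting the lower indices---is the standard route and does produce the stated range, so this is a bookkeeping gap rather than a conceptual one.
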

In this paper, we generalize the integral representation given in Theorem \ref{integral} to the Schur multiple zeta values. In other words, we consider the function  
\[I_{{\pmb k}}(s)=-\frac{\sin(\pi s)}{\pi}\sum_{(n_{ij})\in SSYT_{\lambda/\mu}}\prod_{(i,j)\in D_{\lambda/\mu}}\frac{1}{n_{ij}^{k_{ij}-1}}\int_0^{\infty}w^{-s-1}\prod_{(i,j)\in D_{\lambda/\mu}}\frac{1}{w+n_{ij}}dw\]
 in Section \ref{section2} and show that this function actually interpolates $\mathcal{O}$-sum for the Schur multiple zeta values in Section \ref{section3}. Moreover, we prove
\begin{theorem}[Theorem \ref{dualityschur}]
Let $\lambda$ and $\mu$ be partitions. Put $\delta=\lambda/\mu$ and ${\pmb k} \in I_{\delta}^{\rm D}$
and let ${\pmb k}^{\dagger}$ be the dual tableau of ${\pmb k}$, for $s\in \mathbb C$ we have
\[
I_{\pmb k}(s)=I_{{\pmb k}^{\dagger}}(s).
\]
\end{theorem}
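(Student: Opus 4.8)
The plan is to deduce the identity from the corresponding statement at non-negative integers, where it is already known, and then to pass to general $s$ by a rigidity argument for holomorphic functions. Concretely, I would first invoke the results of Sections \ref{section2} and \ref{section3}, namely that $I_{\pmb k}(s)$ (and, for the dual tableau, $I_{\pmb k^\dagger}(s)$) is holomorphic on a half-plane of the form $\Re(s)>-1$ and that on this half-plane it interpolates the Ohno sum, $I_{\pmb k}(m)=\mathcal O(\pmb k:m)$ for all $m\in\mathbb Z_{\ge 0}$. This interpolation property is the Schur counterpart of \cite[Lemma 2.2]{hmo} combined with Theorem \ref{integral}: the integral $\int_0^\infty w^{-s-1}\prod_{(i,j)}(w+n_{ij})^{-1}\,dw$ has a simple pole at each $s=m\in\mathbb Z_{\ge 0}$ whose residue is a partial-fraction sum over the boxes, and multiplying by $\sin(\pi s)$ kills the pole and leaves precisely the contribution of the shifts $\pmb k+\pmb\varepsilon$ with $|\pmb\varepsilon|=m$. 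Granting this, Theorem \ref{Thmohno} (which applies because $\pmb k\in I_\delta^{\mathrm D}$) yields
\[
I_{\pmb k}(m)=\mathcal O(\pmb k:m)=\mathcal O(\pmb k^\dagger:m)=I_{\pmb k^\dagger}(m)\qquad(m\in\mathbb Z_{\ge 0}),
\]
so $F(s):=I_{\pmb k}(s)-I_{\pmb k^\dagger}(s)$ is holomorphic on $\Re(s)>-1$ and vanishes on $\mathbb Z_{\ge 0}$.

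The second step is to show that $F$ satisfies a growth bound on the right half-plane forcing $F\equiv 0$. The cleanest way is to obtain a representation of $I_{\pmb k}(s)$, valid for $\Re(s)>-1$, as an absolutely convergent Dirichlet-type series in which $s$ enters only through factors $n^{-s}$ with $n\in\mathbb Z_{\ge 1}$ (the Schur analogue of \eqref{ohnofunctionzeta}); such a series is dominated term-by-term, uniformly for $\Re(s)\ge 0$, by its value at $s=0$, so $I_{\pmb k}$ and $I_{\pmb k^\dagger}$ are \emph{bounded} on $\Re(s)\ge 0$. A bounded holomorphic function on $\Re(s)>0$ vanishing at $1,2,3,\dots$ is identically zero, since the sequence $\{n\}_{n\ge1}$ fails the Blaschke condition $\sum_n\Re(z_n)/(1+|z_n|^2)<\infty$ (equivalently, this is the relevant case of Carlson's theorem). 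Hence $F\equiv 0$ on $\Re(s)>0$, then on all of $\Re(s)>-1$ by the identity theorem, and finally wherever both sides are defined; this proves the theorem.

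I expect the main obstacle to be exactly the growth estimate in the second step. On the one hand, the integral representation defining $I_{\pmb k}(s)$ carries the factor $\sin(\pi s)$, which grows like $e^{\pi|\Im(s)|}$, so a naive use of it only gives exponential type $\pi$, i.e.\ the borderline case of Carlson, and one must organise the cancellation with the decay in $\Im(s)$ of the $w$-integral carefully (or, better, bypass it altogether by working with the Dirichlet series). On the other hand, in a semi-standard tableau two entries in different columns may coincide, so the naive ``partial-fraction'' analogue of \eqref{ohnofunctionzeta} has apparent poles; the series must be reorganised — for instance column by column, where entries are strictly increasing, using the decomposition \eqref{notationk} — so that these cancel and absolute convergence on $\Re(s)\ge 0$ becomes visible. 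An alternative route that sidesteps Carlson entirely would be to prove a Schur analogue of the iterated-integral representation of $I_{\pmb k}(s)$ and to derive the duality directly from the reflection $t\mapsto 1-t$ of the underlying order polytope, mirroring the classical proof of the duality formula; this would, however, require carrying the combinatorial bookkeeping of \cite{no} through the extra factor $(\log(\cdots))^{s}$.
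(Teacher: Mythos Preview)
Your strategy coincides with the paper's: first use Theorem \ref{Thmohno} together with Theorem \ref{integralrepschur} to get $I_{\pmb k}(m)=I_{\pmb k^\dagger}(m)$ for all $m\in\mathbb Z_{\ge0}$, then apply a rigidity principle for holomorphic functions. The only difference is in the rigidity step: the paper observes, via the explicit series form of Lemma \ref{explicit}, that $I_{\pmb k}(s)$ is (a finite combination of polynomial-in-$s$ multiples of) ordinary Dirichlet series and invokes the uniqueness theorem for Dirichlet series \cite[Theorem 11.3]{ap}, whereas you propose a Carlson/Blaschke argument. Both work, and in fact the Dirichlet-series route sidesteps the delicate growth accounting you worry about in your last paragraph. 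One small correction to your version: because of the factors $\prod_{p=1}^{\ell-1}(s+\ell-p)/(\ell-p)$ in Lemma \ref{explicit}, $I_{\pmb k}(s)$ is not bounded on $\Re(s)\ge0$ but only of polynomial growth there; this is still comfortably within Carlson's hypotheses, so your argument survives. Your concern about coinciding entries in different columns of an SSYT is precisely what Lemmas \ref{partial} and \ref{explicit} handle, so no further column-by-column reorganisation is needed.
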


\section{Integral representation and series expansion}
\label{section2}
In this section, we make preparations for constructing the function $I_{{\pmb k}}(s)$ for Schur multiple zeta values.
As in introduction, taking Theorem \ref{integral} into account, proved by Kamano and Onozuka \cite[Theorem 1.6]{ko}, we can expect that $I_{\pmb k}(s)$ can be defined as follows: 
\[I_{\pmb k}(s)=-\frac{\sin(\pi s)}{\pi}\sum_{(n_{ij})\in SSYT_{\lambda/\mu}}\prod_{(i,j)\in D_{\lambda/\mu}}\frac{1}{n_{ij}^{k_{ij}-1}}\int_0^{\infty}w^{-s-1}\prod_{(i,j)\in D_{\lambda/\mu}}\frac{1}{w+n_{ij}}dw.\]
We first prove the following lemma for the calculation of this integral:
 \begin{lemma}
  \label{betas}
  For any positive integers $r, n$ and $s\in \mathbb C$ with $-r<\Re(s)<0$,
  \[\int_0^{\infty}\frac{w^{-s-1}}{(w+n)^r}dw=-\frac{\pi}{\sin(\pi s)}\frac{1}{n^{s+r}}\prod_{\ell=1}^{r-1} \frac{s+r-\ell}{r-\ell}.\]
  \end{lemma}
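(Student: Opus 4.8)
The plan is to rescale the variable so that the integral becomes a standard Beta integral, evaluate it in terms of Gamma functions, and then convert the resulting quotient of Gamma values into the stated product by means of the reflection formula.

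First I would substitute $w=nu$, so that $dw=n\,du$ and
\[
\int_0^{\infty}\frac{w^{-s-1}}{(w+n)^r}\,dw=n^{-s-r}\int_0^{\infty}\frac{u^{-s-1}}{(1+u)^r}\,du .
\]
The remaining integral I recognize as the Beta integral in the form $\int_0^{\infty}u^{x-1}(1+u)^{-x-y}\,du=B(x,y)=\Gamma(x)\Gamma(y)/\Gamma(x+y)$ with $x=-s$ and $y=s+r$; the hypothesis $-r<\Re(s)<0$ is precisely the condition $\Re(x)>0$, $\Re(y)>0$ guaranteeing convergence. This yields
\[
\int_0^{\infty}\frac{w^{-s-1}}{(w+n)^r}\,dw=\frac{1}{n^{s+r}}\,\frac{\Gamma(-s)\,\Gamma(s+r)}{\Gamma(r)} .
\]

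It then remains to verify the elementary identity
\[
\frac{\Gamma(-s)\,\Gamma(s+r)}{\Gamma(r)}=-\frac{\pi}{\sin(\pi s)}\prod_{\ell=1}^{r-1}\frac{s+r-\ell}{r-\ell}.
\]
For this I would use the reflection formula $\Gamma(z)\Gamma(1-z)=\pi/\sin(\pi z)$ with $z=-s$ to write $\Gamma(-s)=-\pi/\bigl(\sin(\pi s)\,\Gamma(s+1)\bigr)$, then use $\Gamma(s+r)/\Gamma(s+1)=\prod_{j=1}^{r-1}(s+j)$ and $\Gamma(r)=(r-1)!=\prod_{j=1}^{r-1}j$, and finally reindex $j=r-\ell$ in both products. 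One should note that the apparent poles of $1/\sin(\pi s)$ at $s=-1,\dots,-(r-1)$ are cancelled by the zeros of $\prod_{\ell=1}^{r-1}(s+r-\ell)$, so that both sides of the asserted equality are holomorphic on the strip $-r<\Re(s)<0$, in agreement with the left-hand side being represented there by a convergent integral.

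There is no serious obstacle: this is a routine evaluation, and the only points requiring a little care are tracking the region of convergence when invoking the Beta integral and carrying out the reindexing of the product correctly. Should one wish to avoid quoting the Beta integral, an alternative is to set $F_r(s)=\int_0^{\infty}w^{-s-1}(w+n)^{-r}\,dw$, establish the recursion $F_r(s)=-\tfrac{s+1}{r-1}\,F_{r-1}(s+1)$ by integration by parts, solve it down to the base case $F_1(s)=-\tfrac{\pi}{\sin(\pi s)}\,n^{-s-1}$, and then remove the auxiliary constraint $\Re(s)<1-r$ coming from the boundary terms by analytic continuation in $s$.
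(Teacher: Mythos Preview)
Your proof is correct and follows essentially the same route as the paper: the substitution $w=nv$ to reduce to a Beta integral $B(-s,s+r)$, followed by the reflection formula and the functional equation for $\Gamma$ (equivalently, the recurrence for $B$) to obtain the product. Your write-up is in fact more detailed than the paper's, which simply cites ``a recurrence relation for beta functions and the reflection formula'' for the final step.
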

  \begin{proof}
  Changing the variable by $w=nv$ leads to
 \begin{align*}
     \int_0^{\infty}\frac{w^{-s-1}}{(w+n)^r}dw&=n^{-s-r}\int_0^{\infty}\frac{v^{-s-1}}{(v+1)^r}dv=\frac{1}{n^{s+r}}B(-s,s+r),\\
     \intertext{where $B$ is the beta function. By a recurrence relation for beta functions and the reflection formula, we have}
     \int_0^{\infty}\frac{w^{-s-1}}{(w+n)^r}dw&=\frac{1}{n^{s+r}}\prod_{\ell=1}^{r-1} \frac{s+r-\ell}{r-\ell}B(-s,s+1)\\
     &=-\frac{\pi}{\sin(\pi s)}\frac{1}{n^{s+r}}\prod_{\ell=1}^{r-1} \frac{s+r-\ell}{r-\ell}.
 \end{align*}
 This proves the lemma.
  \end{proof}
We next consider, as an example, the case of $\lambda=(2,1)$ and show that the function produced by our calculation interpolates $\mathcal{O}$-sum with respect to $\lambda=(2,1)$.
In $-1<\Re(s)<0$, by arranging the order of the running indices $n_{11},n_{12}$ and $n_{21}$, we compute
\begin{align*}
    &\sum_{(n_{ij})\in SSYT_{\lambda}}\frac{1}{n_{11}^{k_{11}-1}n_{12}^{k_{12}-1}n_{21}^{k_{21}-1}}\int_0^{\infty}\frac{w^{-s-1}}{(w+n_{11})(w+n_{12})(w+n_{21})}dw\\
    &=\sum_{n_{11}=n_{12}<n_{21}}\frac{1}{n_{11}^{k_{11}-1}n_{12}^{k_{12}-1}n_{21}^{k_{21}-1}}\int_0^{\infty}\frac{w^{-s-1}}{(w+n_{11})^2(w+n_{21})}dw\\
    &\hspace{5mm}+\sum_{n_{11}<n_{12}=n_{21}}\frac{1}{n_{11}^{k_{11}-1}n_{12}^{k_{12}-1}n_{21}^{k_{21}-1}}\int_0^{\infty}\frac{w^{-s-1}}{(w+n_{11})(w+n_{21})^2}dw\\
    &\hspace{5mm}+\left(\sum_{n_{11}<n_{12}<n_{21}}+\sum_{n_{11}<n_{21}<n_{12}}\right)\frac{1}{n_{11}^{k_{11}-1}n_{12}^{k_{12}-1}n_{21}^{k_{21}-1}}\int_0^{\infty}\frac{w^{-s-1}}{(w+n_{11})(w+n_{12})(w+n_{21})}dw\\
    &=\sum_{n_{11}=n_{12}<n_{21}}\frac{1}{n_{11}^{k_{11}-1}n_{12}^{k_{12}-1}n_{21}^{k_{21}-1}}\int_0^{\infty}\frac{w^{-s-1}}{(w+n_{11})^2(w+n_{21})}dw\\
     &\hspace{5mm}+\sum_{n_{11}<n_{12}=n_{21}}\frac{1}{n_{11}^{k_{11}-1}n_{12}^{k_{12}-1}n_{21}^{k_{21}-1}}\int_0^{\infty}\frac{w^{-s-1}}{(w+n_{11})(w+n_{21})^2}dw\\
    &\hspace{5mm}-\frac{\pi}{\sin(\pi s)}\left(\sum_{n_{11}<n_{12}<n_{21}}+\sum_{n_{11}<n_{21}<n_{12}}\right)\sum_{(i,j)\in D_{\lambda}}\frac{1}{n_{11}^{k_{11}}n_{12}^{k_{12}}n_{21}^{k_{21}}}
\frac{1}{n_{ij}^{s}} \prod_{(i,j)\ne (i',j')} \frac{n_{i'j'}}{ n_{i'j'}-n_{ij} }.
\end{align*}
The second and fourth terms are obtained by the same procedure as in \cite{ko}. We consider the integral 
\[\int_0^{\infty}\frac{w^{-s-1}}{(w+n_{11})^2(w+n_{21})}dw.\]
The partial fraction decomposition and Lemma \ref{betas} lead to
\begin{align*}
    &\int_0^{\infty}\frac{w^{-s-1}}{(w+n_{11})^2(w+n_{21})}dw\\
    &=\int_0^{\infty}\frac{w^{-s-1}}{(w+n_{11})^2}\frac{1}{(n_{21}-n_{11})}-\frac{w^{-s-1}}{w+n_{11}}\frac{1}{(n_{21}-n_{11})^2}+\frac{w^{-s-1}}{w+n_{21}}\frac{1}{(n_{11}-n_{21})^2}dw\\
    &=-\frac{\pi}{\sin(\pi s)}\left(\frac{(1+s)}{n_{11}^{s+2}}\frac{1}{(n_{21}-n_{11})}-\frac{1}{n_{11}^{s+1}}\frac{1}{(n_{21}-n_{11})^2}+\frac{1}{n_{21}^{s+1}}\frac{1}{(n_{11}-n_{21})^2}\right).\\
\end{align*}
In addition, we have 
\begin{align*}
    &-\frac{\sin(\pi s)}{\pi}\sum_{n_{11}=n_{12}<n_{21}}\frac{1}{n_{11}^{k_{11}-1}n_{12}^{k_{12}-1}n_{21}^{k_{21}-1}}\int_0^{\infty}\frac{w^{-s-1}}{(w+n_{11})^2(w+n_{21})}dw\\
    &=(1+s)\sum_{n_{11}=n_{12}<n_{21}}\frac{1}{n_{11}^{k_{11}}n_{12}^{k_{12}}n_{21}^{k_{21}}}\frac{1}{n_{11}^{s}}\frac{n_{21}}{(n_{21}-n_{11})}-\sum_{n_{11}=n_{12}<n_{21}}\frac{1}{n_{11}^{k_{11}}n_{12}^{k_{12}}n_{21}^{k_{21}}}\frac{1}{n_{11}^{s}}\frac{n_{11}n_{21}}{(n_{21}-n_{11})^2}\\
    &\hspace{5mm}+\sum_{n_{11}=n_{12}<n_{21}}\frac{1}{n_{11}^{k_{11}}n_{12}^{k_{12}}n_{21}^{k_{21}}}\frac{1}{n_{21}^{s}}\frac{n_{11}^2}{(n_{11}-n_{21})^2}.
\end{align*}
By changing the role of $n_{11}$ and $n_{21}$ in the above, we have a similar formula for the case $n_{11}<n_{12}=n_{21}$. Combining these calculations, we have
\begin{align}
\label{def21}
    I_{\pmb k}(s)=&(1+s)\sum_{n_{11}=n_{12}<n_{21}}\frac{1}{n_{11}^{k_{11}}n_{12}^{k_{12}}n_{21}^{k_{21}}}\frac{1}{n_{11}^{s}}\frac{n_{21}}{(n_{21}-n_{11})}\nonumber\\
    &-\sum_{n_{11}=n_{12}<n_{21}}\frac{1}{n_{11}^{k_{11}}n_{12}^{k_{12}}n_{21}^{k_{21}}}\frac{1}{n_{11}^{s}}\frac{n_{11}n_{21}}{(n_{21}-n_{11})^2}+\sum_{n_{11}=n_{12}<n_{21}}\frac{1}{n_{11}^{k_{11}}n_{12}^{k_{12}}n_{21}^{k_{21}}}\frac{1}{n_{21}^{s}}\frac{n_{11}^2}{(n_{11}-n_{21})^2}\nonumber\\
    &+(1+s)\sum_{n_{11}<n_{12}=n_{21}}\frac{1}{n_{11}^{k_{11}}n_{12}^{k_{12}}n_{21}^{k_{21}}}\frac{1}{n_{21}^{s}}\frac{n_{11}}{(n_{11}-n_{21})}\\
    &-\sum_{n_{11}<n_{12}=n_{21}}\frac{1}{n_{11}^{k_{11}}n_{12}^{k_{12}}n_{21}^{k_{21}}}\frac{1}{n_{21}^{s}}\frac{n_{11}n_{21}}{(n_{11}-n_{21})^2}+\sum_{n_{11}<n_{12}=n_{21}}\frac{1}{n_{11}^{k_{11}}n_{12}^{k_{12}}n_{21}^{k_{21}}}\frac{1}{n_{11}^{s}}\frac{n_{21}^2}{(n_{21}-n_{11})^2}\nonumber\\
    &+\left(\sum_{n_{11}<n_{12}<n_{21}}+\sum_{n_{11}<n_{21}<n_{12}}\right)\sum_{(i,j)\in D_{\lambda}}\frac{1}{n_{11}^{k_{11}}n_{12}^{k_{12}}n_{21}^{k_{21}}}
\frac{1}{n_{ij}^{s}} \prod_{(i,j)\ne (i',j')} \frac{n_{i'j'}}{ n_{i'j'}-n_{ij} }.\nonumber
\end{align}
Since we expect this $I_{\pmb k}(s)$ to interpolate $\mathcal{O}$-sum for $\lambda=(2,1)$, we substitute non-negative integers for $s$. At this stage, although non-negative integers are outside of the domain of $I_{\pmb k}(s)$ given by the integral, we can consider $I_{\pmb k}(s)$ to be analytically continued to the half plane $\Re(s)>-1$ since the series on the right-hand side converges.
Substituting $s=0$, the right-hand side becomes
\begin{align*}
    I_{\pmb k}(0)=&\sum_{n_{11}=n_{12}<n_{21}}\frac{1}{n_{11}^{k_{11}}n_{12}^{k_{12}}n_{21}^{k_{21}}}\frac{n_{21}}{(n_{21}-n_{11})}-\sum_{n_{11}=n_{12}<n_{21}}\frac{1}{n_{11}^{k_{11}}n_{12}^{k_{12}}n_{21}^{k_{21}}}\frac{n_{11}n_{21}}{(n_{21}-n_{11})^2}\\
    &+\sum_{n_{11}=n_{12}<n_{21}}\frac{1}{n_{11}^{k_{11}}n_{12}^{k_{12}}n_{21}^{k_{21}}}\frac{n_{11}^2}{(n_{11}-n_{21})^2}\\
    &+\sum_{n_{11}<n_{12}=n_{21}}\frac{1}{n_{11}^{k_{11}}n_{12}^{k_{12}}n_{21}^{k_{21}}}\frac{n_{11}}{(n_{11}-n_{21})}-\sum_{n_{11}<n_{12}=n_{21}}\frac{1}{n_{11}^{k_{11}}n_{12}^{k_{12}}n_{21}^{k_{21}}}\frac{n_{11}n_{21}}{(n_{11}-n_{21})^2}\\
    &+\sum_{n_{11}<n_{12}=n_{21}}\frac{1}{n_{11}^{k_{11}}n_{12}^{k_{12}}n_{21}^{k_{21}}}\frac{n_{21}^2}{(n_{21}-n_{11})^2}\\
    &+\sum_{n_{11}<n_{21}<n_{12}}\frac{1}{n_{11}^{k_{11}}n_{12}^{k_{12}}n_{21}^{k_{21}}}+\sum_{n_{11}<n_{12}<n_{21}}\frac{1}{n_{11}^{k_{11}}n_{12}^{k_{12}}n_{21}^{k_{21}}}\\
    =&\zeta_\lambda(\pmb k).
  \end{align*}
  Substituting $s=m\in\mathbb Z_{\ge0}$, the right-hand side becomes
\begin{align*}
    I_{\pmb k}(m)=&(1+m)\sum_{n_{11}=n_{12}<n_{21}}\frac{1}{n_{11}^{k_{11}}n_{12}^{k_{12}}n_{21}^{k_{21}}}\frac{1}{n_{11}^{m}}\frac{n_{21}}{(n_{21}-n_{11})}\\
    &-\sum_{n_{11}=n_{12}<n_{21}}\frac{1}{n_{11}^{k_{11}}n_{12}^{k_{12}}n_{21}^{k_{21}}}\frac{1}{n_{11}^{m}}\frac{n_{11}n_{21}}{(n_{21}-n_{11})^2}+\sum_{n_{11}=n_{12}<n_{21}}\frac{1}{n_{11}^{k_{11}}n_{12}^{k_{12}}n_{21}^{k_{21}}}\frac{1}{n_{21}^{m}}\frac{n_{11}^2}{(n_{11}-n_{21})^2}\\
    &+(1+m)\sum_{n_{11}<n_{12}=n_{21}}\frac{1}{n_{11}^{k_{11}}n_{12}^{k_{12}}n_{21}^{k_{21}}}\frac{1}{n_{21}^{m}}\frac{n_{11}}{(n_{11}-n_{21})}\\
    &-\sum_{n_{11}<n_{12}=n_{21}}\frac{1}{n_{11}^{k_{11}}n_{12}^{k_{12}}n_{21}^{k_{21}}}\frac{1}{n_{21}^{m}}\frac{n_{11}n_{21}}{(n_{11}-n_{21})^2}+\sum_{n_{11}<n_{12}=n_{21}}\frac{1}{n_{11}^{k_{11}}n_{12}^{k_{12}}n_{21}^{k_{21}}}\frac{1}{n_{11}^{m}}\frac{n_{21}^2}{(n_{21}-n_{11})^2}\\
    &+\sum_{e_1+e_2+e_3=m}\sum_{n_{11}<n_{21}<n_{12}}\frac{1}{n_{11}^{k_{11}+e_1}n_{12}^{k_{12}+e_2}n_{21}^{k_{21}+e_3}}\\
    &+\sum_{e_1+e_2+e_3=m}\sum_{n_{11}<n_{12}<n_{21}}\frac{1}{n_{11}^{k_{11}+e_1}n_{12}^{k_{12}+e_2}n_{21}^{k_{21}+e_3}}\\
    =&\sum_{n_{11}=n_{12}<n_{21}}\frac{1}{n_{11}^{k_{11}}n_{12}^{k_{12}}n_{21}^{k_{21}}}\left(\frac{m+1}{n_{11}^m}+\frac{m}{n_{11}^{m-1}n_{21}}+\cdots+\frac{1}{n_{21}^m}\right)\\
    &+\sum_{n_{11}<n_{12}=n_{21}}\frac{1}{n_{11}^{k_{11}}n_{12}^{k_{12}}n_{21}^{k_{21}}}\left(\frac{m+1}{n_{21}^m}+\frac{m}{n_{21}^{m-1}n_{11}}+\cdots+\frac{1}{n_{11}^m}\right)\\
    &+\left(\sum_{n_{11}<n_{21}<n_{12}}+\sum_{n_{11}<n_{12}<n_{21}}\right)\sum_{e_1+e_2+e_3=m}\frac{1}{n_{11}^{k_{11}+e_1}n_{12}^{k_{12}+e_2}n_{21}^{k_{21}+e_3}}\\
    =&\sum_{|\pmb \varepsilon|=m}\zeta_{\lambda}(\pmb k+\pmb \varepsilon).
  \end{align*}
The above two calculations ensure that our $I_{\pmb k}(s)$ interpolates $\mathcal{O}$-sum associated with $\lambda=(2,1)$.
Based on this, we would like to produce a series expression of $I_{\pmb{k}}(s)$ for the general case, as well. In preparation for that, we offer the following lemma, which gives explicitly the coefficients of the partial fraction decomposition:
\begin{lemma}
\label{partial}
Let 
\[P(N)=\prod_{\alpha=1}^{R_N}\frac{1}{(w+n_\alpha)^{r_{\alpha}}}\]
with distinct integers $n_1,\ldots,n_{R_N}$. Then 
\[P(N)=\sum_{\alpha=1}^{R_N}\sum_{\ell=1}^{r_{\alpha}}\frac{1}{(w+n_\alpha)^{\ell}}\frac{d^{r_\alpha-\ell}}{dw^{r_\alpha-\ell}}\left.\left(\frac{1}{(r_\alpha-\ell)!}\prod_{\beta\neq\alpha}\frac{1}{(w+n_\beta)^{r_{\beta}}}\right)\right|_{w=-n_\alpha}.\]
\end{lemma}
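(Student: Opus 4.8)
The plan is to start from the existence of the partial fraction decomposition of $P(N)$ and then pin down its coefficients by a purely local Taylor analysis at each pole. Since $P(N)$ is a proper rational function in $w$ whose only poles are at $w=-n_\alpha$, each of order exactly $r_\alpha$ (the $n_\alpha$ being distinct), it admits a unique decomposition
\[
P(N)=\sum_{\alpha=1}^{R_N}\sum_{\ell=1}^{r_\alpha}\frac{c_{\alpha,\ell}}{(w+n_\alpha)^\ell}
\]
for suitable constants $c_{\alpha,\ell}$. This is standard and may itself be established by induction on $\sum_\alpha r_\alpha$, peeling off one factor $(w+n_\alpha)^{-1}$ at a time; I would simply invoke it.

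Next I would identify the $c_{\alpha,\ell}$. Fix $\alpha$ and multiply the decomposition by $(w+n_\alpha)^{r_\alpha}$:
\[
\prod_{\beta\neq\alpha}\frac{1}{(w+n_\beta)^{r_\beta}}
=\sum_{\ell=1}^{r_\alpha}c_{\alpha,\ell}(w+n_\alpha)^{r_\alpha-\ell}
+(w+n_\alpha)^{r_\alpha}\sum_{\beta\neq\alpha}\sum_{\ell=1}^{r_\beta}\frac{c_{\beta,\ell}}{(w+n_\beta)^\ell}.
\]
The left-hand side is holomorphic in a neighbourhood of $w=-n_\alpha$, while the last double sum on the right, multiplied by $(w+n_\alpha)^{r_\alpha}$, vanishes to order at least $r_\alpha$ there. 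Hence, equating Taylor coefficients at $w=-n_\alpha$, for each $0\le j\le r_\alpha-1$ the coefficient of $(w+n_\alpha)^{j}$ on the left must equal $c_{\alpha,r_\alpha-j}$. Writing $\ell=r_\alpha-j$ and using the Taylor formula for the left-hand side, this gives
\[
c_{\alpha,\ell}=\frac{1}{(r_\alpha-\ell)!}\left.\frac{d^{r_\alpha-\ell}}{dw^{r_\alpha-\ell}}\left(\prod_{\beta\neq\alpha}\frac{1}{(w+n_\beta)^{r_\beta}}\right)\right|_{w=-n_\alpha},
\]
with the convention that the zeroth derivative is the function itself. Substituting this back into the decomposition yields exactly the claimed identity.

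There is essentially no hard step here; the only point deserving care is the observation that the poles $\beta\neq\alpha$ do not contribute to the Taylor coefficients of order $<r_\alpha$ at $w=-n_\alpha$, which is immediate from the isolating factor $(w+n_\alpha)^{r_\alpha}$. One could equivalently phrase the whole argument through the principal-part (Laurent) operator at $w=-n_\alpha$, reading off $c_{\alpha,\ell}$ as the coefficient of $(w+n_\alpha)^{-\ell}$ in the Laurent expansion of $P(N)$; the two formulations coincide and produce the same closed form.
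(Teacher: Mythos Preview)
Your argument is correct and is essentially the same as the paper's: the authors' proof consists of the single sentence ``This lemma follows from the uniqueness of the Laurent series expansion,'' and your Taylor/principal-part computation is exactly the standard way to unpack that remark. Nothing needs to be added or changed.
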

\begin{proof}
This lemma follows from the uniqueness of the Laurent series expansion.
\end{proof}
We apply Lemma \ref{partial} with Lemma \ref{betas}, then it holds that
\begin{align}
\label{integralpart}
    &-\frac{\sin(\pi s)}{\pi}\int_0^\infty P(N)w^{-s-1}\ dw\nonumber\\
    &=\sum_{\alpha=1}^{R_N}\sum_{\ell=1}^{r_{\alpha}}\frac{1}{n_{\alpha}^{s+\ell}}\prod_{p=1}^{\ell-1} \frac{s+\ell-p}{\ell-p}\frac{d^{r_\alpha-\ell}}{dw^{r_\alpha-\ell}}\left.\left(\frac{1}{(r_\alpha-\ell)!}\prod_{\beta\neq\alpha}\frac{1}{(w+n_\beta)^{r_{\beta}}}\right)\right|_{w=-n_\alpha}.
\end{align}
For $N=(n_{ij})\in SSYT_{\lambda}$, we rewrite \[\prod_{(i,j)\in D_{\lambda/\mu}}\frac{1}{w+n_{ij}}=\prod_{\alpha=1}^{R_N}\frac{1}{(w+n_\alpha)^{r_{\alpha}}}\]
by summarizing the same $n_{ij}$.
Identity (\ref{integralpart}) then leads to
\begin{align*}
    &I_{\pmb k}(s)\\
    &=-\frac{\sin(\pi s)}{\pi}\sum_{N\in SSYT_{\lambda/\mu}}\frac{1}{N^{\pmb k-\pmb 1}}\int_0^{\infty}w^{-s-1}\prod_{(i,j)\in D_{\lambda/\mu}}\frac{1}{w+n_{ij}}dw\\
    &=\sum_{N\in SSYT_{\lambda/\mu}}\frac{1}{N^{\pmb k-\pmb 1}}\sum_{\alpha=1}^{R_N}\sum_{\ell=1}^{r_{\alpha}}\frac{1}{n_{\alpha}^{s+\ell}}\prod_{p=1}^{\ell-1} \frac{s+\ell-p}{\ell-p}\frac{d^{r_\alpha-\ell}}{dw^{r_\alpha-\ell}}\left.\left(\frac{1}{(r_\alpha-\ell)!}\prod_{\beta\neq\alpha}\frac{1}{(w+n_\beta)^{r_{\beta}}}\right)\right|_{w=-n_\alpha}.
\end{align*} 
We can now summarize the above as follows.
  \begin{lemma}[Explicit series form of $I_{\pmb k}(s)$]
  \label{explicit}
    For $-1<\Re(s)<0$,
\[    I_{\pmb k}(s)=\sum_{N\in SSYT_{\lambda/\mu}}\frac{1}{N^{\pmb k-\pmb 1}}\sum_{\alpha=1}^{R_N}\sum_{\ell=1}^{r_{\alpha}}\frac{1}{n_{\alpha}^{s+\ell}}\prod_{p=1}^{\ell-1} \frac{s+\ell-p}{\ell-p}\frac{d^{r_\alpha-\ell}}{dw^{r_\alpha-\ell}}\left.\left(\frac{1}{(r_\alpha-\ell)!}\prod_{\beta\neq\alpha}\frac{1}{(w+n_\beta)^{r_{\beta}}}\right)\right|_{w=-n_\alpha}.\]  
  \end{lemma}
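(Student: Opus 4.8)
The plan is to obtain Lemma~\ref{explicit} by an honest computation from the defining series of $I_{\pmb k}(s)$: decompose each integrand into partial fractions, integrate term by term, and reassemble. The only steps that are not purely formal are (i) pinning down the range of $s$ for which each elementary integral converges, which is exactly what forces the hypothesis $-1<\Re(s)<0$, and (ii) verifying that the resulting double series converges absolutely on that strip.

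Concretely, I would fix $N=(n_{ij})\in SSYT_{\lambda/\mu}$ and collect equal entries, writing $\prod_{(i,j)\in D_{\lambda/\mu}}(w+n_{ij})^{-1}=\prod_{\alpha=1}^{R_N}(w+n_\alpha)^{-r_\alpha}=P(N)$ with the $n_\alpha$ pairwise distinct and $\sum_\alpha r_\alpha=\#D_{\lambda/\mu}$. Lemma~\ref{partial} expands $P(N)$ into partial fractions with the coefficients $\frac{d^{r_\alpha-\ell}}{dw^{r_\alpha-\ell}}\bigl(\frac{1}{(r_\alpha-\ell)!}\prod_{\beta\neq\alpha}(w+n_\beta)^{-r_\beta}\bigr)\big|_{w=-n_\alpha}$ displayed in the statement. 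For $-1<\Re(s)<0$ each elementary integral $\int_0^\infty w^{-s-1}(w+n_\alpha)^{-\ell}\,dw$ converges --- the summand with $\ell=1$ being the borderline case, which is precisely why $\Re(s)>-1$ is needed --- so Lemma~\ref{betas} (with $r=\ell$) evaluates it, and since this is a sum of finitely many terms the integration may be carried out termwise; multiplying through by $-\sin(\pi s)/\pi$ cancels the factor $-\pi/\sin(\pi s)$ produced by Lemma~\ref{betas} and yields identity~\eqref{integralpart} for the single tableau $N$. Then $I_{\pmb k}(s)$, obtained by summing $-\frac{\sin(\pi s)}{\pi}\int_0^\infty P(N)w^{-s-1}\,dw$ against $1/N^{\pmb k-\pmb 1}$ over $N\in SSYT_{\lambda/\mu}$, becomes the asserted double series.

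The one thing that genuinely needs to be checked is the absolute convergence of that double series on the strip $-1<\Re(s)<0$ (which also retroactively confirms that the defining integral series for $I_{\pmb k}(s)$ converges there). For this I would estimate the partial-fraction coefficients --- which contain factors $(n_\beta-n_\alpha)^{-j}$ that become large when two equal-valued blocks of $N$ are numerically close --- and sum them against $1/N^{\pmb k-\pmb 1}$, using the corner conditions $k_{ij}\ge 2$ on $C_{\lambda/\mu}$ together with the absolute convergence of $\zeta_{\lambda/\mu}$ on $W_{\lambda/\mu}$, in the spirit of the $\lambda=(2,1)$ computation carried out above. I expect this estimate to be the only real obstacle, and a mild one; everything else is a mechanical application of Lemmas~\ref{partial} and~\ref{betas}.
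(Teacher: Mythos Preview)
Your proposal is correct and follows exactly the route the paper takes: the paper simply applies Lemma~\ref{partial} to $P(N)$, integrates each partial fraction via Lemma~\ref{betas} to obtain identity~\eqref{integralpart}, and then sums over $N\in SSYT_{\lambda/\mu}$, presenting Lemma~\ref{explicit} as a summary of that computation. In fact you are more scrupulous than the paper, which does not explicitly address the absolute convergence of the resulting double series on the strip.
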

  Substituting $s=0$, we have
  \begin{align*}
      I_{\pmb k}(0)&=\sum_{N\in SSYT_{\lambda/\mu}}\frac{1}{N^{\pmb k-\pmb 1}}\sum_{\alpha=1}^{R_N}\sum_{\ell=1}^{r_{\alpha}}\frac{1}{n_{\alpha}^{\ell}}\frac{d^{r_\alpha-\ell}}{dw^{r_\alpha-\ell}}\left.\left(\frac{1}{(r_\alpha-\ell)!}\prod_{\beta\neq\alpha}\frac{1}{(w+n_\beta)^{r_{\beta}}}\right)\right|_{w=-n_\alpha}\\
      &=\sum_{N\in SSYT_{\lambda/\mu}}\frac{1}{N^{\pmb k-\pmb 1}}\prod_{\alpha=1}^{R_N}\frac{1}{n_\alpha^{r_{\alpha}}}\\
      &=\zeta_{\lambda/\mu}(\pmb k).
  \end{align*}  
This ensures that the series expansion converges in $\Re(s)\ge0$, which gives the analytic continuation of $I_{\pmb k}(s)$ in $\Re(s)>-1$. Furthermore, the series expansion given in Lemma \ref{explicit} is a sum of products of the polynomial and zeta functions associated with a root system of type $A$. Therefore, $I_{\pmb k}(s)$ can be meromorphically continued to the whole space of $\mathbb C$ (see \cite[Section 2]{mt}).
 \section{Interpolation of the generalized duality formula}
 \label{section3}
In this section, we revisit \cite[Lemma 2.1]{hmo} and generalize the lemma with the case $a_i=a_j$.
\begin{lemma}[{\cite[Lemma 2.1]{hmo}}] \label{partial_frac}
 For $m\in\mathbb{Z}_{\ge 0}$ and $a_1,\ldots,a_r\in\mathbb{R}$ with $a_i\neq a_j$ for $i\neq j$, we have
 \begin{align*}  
  \sum_{\substack{ e_1+\cdots+e_r=m \\ e_i\ge0\,(1\le i\le r) }} 
  a_1^{e_1}\cdots a_r^{e_r} 
  =\sum_{i=1}^r a_i^{m+r-1} \prod_{j\ne i}(a_i-a_j)^{-1}. 
 \end{align*} 
\end{lemma}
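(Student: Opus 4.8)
The final statement to prove is Lemma \ref{partial_frac}, a classical partial fraction identity expressing the complete homogeneous symmetric polynomial $h_m(a_1,\dots,a_r)$ as $\sum_i a_i^{m+r-1}\prod_{j\neq i}(a_i-a_j)^{-1}$.

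My plan is to establish this via a generating function / partial fraction argument. First I would consider the rational function
\[
F(x)=\prod_{i=1}^r\frac{1}{1-a_ix}.
\]
On one hand, expanding each factor as a geometric series and multiplying out gives
\[
F(x)=\sum_{m\ge 0}\left(\sum_{e_1+\cdots+e_r=m}a_1^{e_1}\cdots a_r^{e_r}\right)x^m,
\]
so the left-hand side of the claimed identity is exactly the coefficient of $x^m$ in $F(x)$. On the other hand, since the $a_i$ are distinct, $F(x)$ has simple poles at $x=1/a_i$ and admits a partial fraction decomposition $F(x)=\sum_{i=1}^r \frac{c_i}{1-a_ix}$; computing the residue at $x=1/a_i$ (equivalently multiplying by $1-a_ix$ and setting $x=1/a_i$) yields
\[
c_i=\prod_{j\ne i}\frac{1}{1-a_j/a_i}=\prod_{j\ne i}\frac{a_i}{a_i-a_j}=a_i^{r-1}\prod_{j\ne i}(a_i-a_j)^{-1}.
\]
Extracting the coefficient of $x^m$ from $\sum_i c_i/(1-a_ix)=\sum_i c_i\sum_{m\ge 0}a_i^m x^m$ then gives $\sum_i c_i a_i^m=\sum_i a_i^{m+r-1}\prod_{j\ne i}(a_i-a_j)^{-1}$, which matches. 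Comparing the two expansions coefficientwise completes the proof. One should also handle the degenerate case where some $a_i=0$: if exactly one $a_i$ vanishes the formula still makes sense and can be checked directly, and the case of $r=1$ is trivial; alternatively one argues by continuity/polynomial identity since both sides are polynomials in the $a_i$ (after clearing denominators).

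An equally clean alternative I would be happy to present instead is a direct induction on $r$. The case $r=1$ reads $a_1^m=a_1^m$. For the inductive step, use the recursion $h_m(a_1,\dots,a_r)=\sum_{e_r=0}^m a_r^{e_r}h_{m-e_r}(a_1,\dots,a_{r-1})$, or better, the telescoping identity
\[
h_m(a_1,\dots,a_r)-a_r\,h_{m-1}(a_1,\dots,a_r)=h_m(a_1,\dots,a_{r-1}),
\]
feed in the inductive formula for the $(r-1)$-variable case, and reorganize the partial fractions using $\frac{1}{a_i-a_r}$ factors; the bookkeeping reduces to the identity $\sum_{i}\prod_{j\ne i}(a_i-a_j)^{-1}=0$ for $r\ge 2$, itself an instance of the lemma with $m$ chosen so that the exponent $m+r-1$ is small, or provable directly from the partial fraction expansion of $1/\prod_i(x-a_i)$.

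I do not expect a serious obstacle here: this is a well-known identity and either route is short. The only mild care needed is (i) justifying the formal manipulation of power series — which is fine since we only compare finitely many coefficients, or we may work in $\mathbb{R}[[x]]$ — and (ii) the edge cases $r=1$ and the vanishing of some $a_i$, which the hypothesis $a_i\neq a_j$ does not exclude. The reason the paper isolates this lemma (rather than citing \cite{hmo} verbatim) is presumably that the sequel needs the companion statement allowing coincidences $a_i=a_j$, where one replaces the simple-pole residues by higher-order ones exactly as in Lemma \ref{partial}; but that generalization is a separate statement, and for the lemma as stated the distinct-$a_i$ partial fraction argument above suffices.
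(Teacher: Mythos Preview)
Your proposal is correct and matches the approach the paper attributes to \cite{hmo}: the paper does not reprove Lemma~\ref{partial_frac} but cites it, and the surrounding discussion makes explicit that the underlying argument is exactly your generating-function/partial-fraction computation, writing $\prod_i \frac{1}{1-a_ix}=\sum_i \frac{A_i}{1-a_ix}$ with $A_i=a_i^{r-1}\prod_{j\ne i}(a_i-a_j)^{-1}$ and reading off the coefficient of $x^m$. Your handling of the edge case $a_i=0$ and the alternative inductive route are fine but unnecessary for the statement as given.
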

We note that if $a_1=a_2$, then for each $i=1,2$ the product $\prod_{j\ne i}(a_i-a_j)^{-1}$ is not defined.
On the other hand, following the way to the proof of Lemma \ref{partial_frac} in \cite{hmo}, we can obtain the partial fraction decomposition form formally. 
For example, letting
 \[
  A_i=a_i^{r-1} \prod_{j\ne i}(a_i-a_j)^{-1},
 \]
 we have
 \begin{align*}
 \frac{1}{1-a_1x}\frac{1}{1-a_2x} \frac{1}{1-a_3x}&=\frac{A_1}{1-a_1x}+\frac{A_2}{1-a_2x}+\frac{A_3}{1-a_3x}\\
 &=\frac{A_1(1-a_2x)+A_2(1-a_1x)}{(1-a_1x)(1-a_2x)}+\frac{A_3}{1-a_3x}.
 \intertext{We note that \[A_1(1-a_2x)+A_2(1-a_1x)=\frac{a_1a_2+a_1a_2a_3x-a_3(a_1+a_2)}{(a_1-a_3)(a_2-a_3)}.\] Therefore,}
 \frac{1}{1-a_1x}\frac{1}{1-a_2x} \frac{1}{1-a_3x} &=\frac{a_1a_2+a_1a_2a_3x-a_3(a_1+a_2)}{(1-a_1x)(1-a_2x)(a_1-a_3)(a_2-a_3)}+\frac{A_3}{1-a_3x}.
 \intertext{Substituting $a_1=a_2$, we have}
 \frac{1}{(1-a_1x)^2} \frac{1}{1-a_3x}&=\frac{a_1^2+a_1^2a_3x-2a_3a_1}{(1-a_1x)^2(a_1-a_3)^2}+\frac{A_3}{1-a_3x}\\
 &=\frac{a_1}{(1-a_1x)^2(a_1-a_3)}-\frac{a_1a_3}{(1-a_1x)(a_1-a_3)^2}+\frac{A_3}{1-a_3x}.
 \end{align*}
Using the above identity, we have
\begin{equation}
\label{ue}
    \sum_{\substack{ e_1+e_2+e_3=m \\ e_i\ge0\,(1\le i\le 3) }} 
  a_1^{e_1}a_2^{e_2} a_3^{e_3} 
  =(m+1)a_1^{m}\frac{a_1}{a_1-a_3}+a_1^{m}\frac{a_1a_3}{(a_1-a_3)^2}+a_3^{m}A_3. 
\end{equation}   
Substituting $a_1=n_{11}^{-1}, a_2=n_{12}^{-1}$ and $a_3=n_{21}^{-1}$ into (\ref{ue}) and making a simple calculation, we obtain formula (\ref{def21}) interpolating $\mathcal{O}$-sum for Schur multiple zeta values of shape $(2,1)$.
Indeed, keeping $a_1=a_2$ and $n_{11}=n_{12}$ in mind, we have
\begin{align}
\label{comp}
&\sum_{\substack{ e_1+e_2+e_3=m \\ e_i\ge0\,(1\le i\le 3) }} 
  \frac{1}{n_{11}^{e_1}}\frac{1}{n_{12}^{e_2}}\frac{1}{n_{21}^{e_3}}\nonumber\\
  &=(m+1)\frac{1}{n_{11}^{m+1}}\left(\frac{1}{n_{11}}-\frac{1}{n_{21}}\right)^{-1}+\frac{1}{n_{11}^{m+1}}\frac{1}{n_{21}}\left(\frac{1}{n_{11}}-\frac{1}{n_{21}}\right)^{-2}+\frac{1}{n_{21}^{m+2
  }}\left(\frac{1}{n_{21}}-\frac{1}{n_{11}}\right)^{-2}\nonumber\\
  &=(m+1)\frac{1}{n_{11}^{m}}\frac{n_{21}}{n_{21}-n_{11}}+\frac{1}{n_{11}^{m}}\frac{n_{11}n_{21}}{(n_{21}-n_{11})^2}+\frac{1}{n_{21}^{m
  }}\frac{n_{11}^2}{(n_{11}-n_{21})^2}.
\end{align}
This calculation corresponds to the terms of (\ref{def21}) with $n_{11}=n_{12}$.
Swapping the role of $n_{11}$ and $n_{21}$, we obtain the identity corresponding to $n_{12}=n_{21}$.
Thus, it holds that
\begin{align*}
    I_{\pmb k}(m)&=\left(\sum_{n_{11}<n_{12}<n_{21}}+\sum_{n_{11}<n_{21}<n_{12}}\right)\frac{1}{n_{11}^{k_{11}}n_{12}^{k_{12}}n_{21}^{k_{21}}}
  \sum_{\substack{ e_1+e_2+e_3=m \\ e_i\ge0\,(1\le i\le 3) }} 
  \frac{1}{n_{11}^{e_1}}\frac{1}{n_{12}^{e_2}}\frac{1}{n_{21}^{e_3}}\\
    &\hspace{5mm}+\left(\sum_{n_{11}=n_{12}<n_{21}}+\sum_{n_{11}<n_{12}=n_{21}}\right)\frac{1}{n_{11}^{k_{11}}n_{12}^{k_{12}}n_{21}^{k_{21}}}\sum_{\substack{ e_1+e_2+e_3=m \\ e_i\ge0\,(1\le i\le 3) }} 
  \frac{1}{n_{11}^{e_1}}\frac{1}{n_{12}^{e_2}}\frac{1}{n_{21}^{e_3}}\\
    &=\sum_{|\pmb \varepsilon|=m}\zeta_{\lambda}(\pmb k+\pmb \varepsilon).
\end{align*}
The first two terms are obtained by the same calculation as in \cite{hmo}; the last two are obtained via computation (\ref{comp}). 

More generally, for fixed $A=(a_i)$, by summarizing the same $a_{i}$, we can define
\[P(A)=\prod_{i=1}^r\frac{1}{1-a_ix}=\prod_{\alpha=1}^{R_A}\frac{1}{(1-b_\alpha x)^{r_{\alpha}}}\]
with distinct $b_1,\ldots,b_{R_A}$. Then, as in the proof of Lemma \ref{partial}, the uniqueness of the Laurent series expansion gives
\[P(A)=\sum_{\alpha=1}^{R_A}\sum_{\ell=1}^{r_{\alpha}}\frac{1}{(1-b_\alpha x)^{\ell}}\frac{d^{r_\alpha-\ell}}{dx^{r_\alpha-\ell}}\left.\left(\frac{1}{(-b_\alpha)^{r_\alpha-\ell}(r_\alpha-\ell)!}\prod_{\beta\neq\alpha}\frac{1}{(1-b_\beta x)^{r_{\beta}}}\right)\right|_{x=b_\alpha^{-1}}.\]
By expanding into the geometric series, we have the following Lemma:
\begin{lemma}
\label{lem21ver}
\[\sum_{\substack{ e_1+\cdots+e_r=m \\ e_i\ge0\,(1\le i\le r) }} 
  a_1^{e_1}\cdots a_r^{e_r} 
  =\sum_{\alpha=1}^{R_A}\sum_{\ell=1}^{r_{\alpha}}\binom{m+\ell-1}{\ell-1}b_\alpha^{m}A_\alpha^{(\ell)},\]
  where \[A_\alpha^{(\ell)}=\frac{d^{r_\alpha-\ell}}{dx^{r_\alpha-\ell}}\left.\left(\frac{1}{(-b_\alpha)^{r_\alpha-\ell}(r_\alpha-\ell)!}\prod_{\beta\neq\alpha}\frac{1}{(1-b_\beta x)^{r_{\beta}}}\right)\right|_{x=b_\alpha^{-1}}.\] 
\end{lemma}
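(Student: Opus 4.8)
The idea is to recognise the left-hand side as a power-series coefficient and then apply, term by term, the partial fraction decomposition of $P(A)$ displayed just above the statement. First I would note that, as formal power series in $x$ (equivalently, as holomorphic functions near $x=0$),
\[
P(A)=\prod_{i=1}^r\frac{1}{1-a_ix}=\sum_{m\ge 0}\left(\sum_{\substack{ e_1+\cdots+e_r=m \\ e_i\ge0 }} a_1^{e_1}\cdots a_r^{e_r}\right)x^m,
\]
so the left-hand side of the lemma is exactly the coefficient of $x^m$ in $P(A)$. Here, as in the intended application where $a_i=n_{ij}^{-1}$, we assume every $a_i$ is nonzero, so that each distinct value $b_\alpha$ is also nonzero and the quantities $A_\alpha^{(\ell)}$ are well defined.

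Next I would insert the partial fraction decomposition
\[
P(A)=\sum_{\alpha=1}^{R_A}\sum_{\ell=1}^{r_{\alpha}}\frac{A_\alpha^{(\ell)}}{(1-b_\alpha x)^{\ell}},
\]
obtained just before the statement from the uniqueness of the Laurent expansion (exactly as in the proof of Lemma \ref{partial}), and expand each summand by the generalized binomial theorem,
\[
\frac{1}{(1-b_\alpha x)^{\ell}}=\sum_{m\ge 0}\binom{m+\ell-1}{\ell-1}b_\alpha^{m}x^m.
\]
Comparing the coefficient of $x^m$ on the two sides then yields
\[
\sum_{\substack{ e_1+\cdots+e_r=m \\ e_i\ge0 }} a_1^{e_1}\cdots a_r^{e_r}=\sum_{\alpha=1}^{R_A}\sum_{\ell=1}^{r_{\alpha}}\binom{m+\ell-1}{\ell-1}b_\alpha^{m}A_\alpha^{(\ell)},
\]
which is the assertion.

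There is essentially no real obstacle here: one only needs that $1-b_\alpha x$ is a unit in $\mathbb{C}[[x]]$ (its constant term is $1$) so that the binomial expansions are legitimate, and that interchanging the finite double sum over $(\alpha,\ell)$ with the geometric expansion is harmless; if one prefers an analytic formulation, all series converge absolutely for $|x|$ sufficiently small and equality of holomorphic functions near $0$ forces equality of Taylor coefficients. As a consistency check one may observe that when all $a_i$ are distinct every $r_\alpha=1$, so $\binom{m+\ell-1}{\ell-1}=1$ and $A_\alpha^{(1)}=b_\alpha^{\,r-1}\prod_{\beta\ne\alpha}(b_\alpha-b_\beta)^{-1}$, which recovers Lemma \ref{partial_frac}.
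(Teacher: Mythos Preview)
Your proof is correct and follows essentially the same route as the paper: recognise the left-hand side as the $x^m$-coefficient of $P(A)$, insert the partial fraction decomposition of $P(A)$ given immediately before the lemma, expand each $(1-b_\alpha x)^{-\ell}$ by the (generalized) geometric series, and compare coefficients. The paper summarises this in the single sentence ``By expanding into the geometric series, we have the following Lemma,'' so your write-up simply makes explicit what the paper leaves implicit.
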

\begin{theorem}
  \label{integralrepschur}
  The function $I_{\pmb k}(s)$, defined by
  \begin{align*}
   I_{\pmb k}(s)&=-\frac{\sin(\pi s)}{\pi}\sum_{(n_{ij})\in SSYT_{\lambda/\mu}}\prod_{(i,j)\in D_{\lambda/\mu}}\frac{1}{n_{ij}^{k_{ij}-1}}\int_0^{\infty}w^{-s-1}\prod_{(i,j)\in D_{\lambda/\mu}}\frac{1}{w+n_{ij}}dw,
   \intertext{or}
   I_{\pmb k}(s)&=\sum_{N\in SSYT_{\lambda/\mu}}\frac{1}{N^{\pmb k-\pmb 1}}\sum_{\alpha=1}^{R_N}\sum_{\ell=1}^{r_{\alpha}}\frac{1}{n_{\alpha}^{m+\ell}}\prod_{p=1}^{\ell-1} \frac{s+\ell-p}{\ell-p}\frac{d^{r_\alpha-\ell}}{dw^{r_\alpha-\ell}}\left.\left(\frac{1}{(r_\alpha-\ell)!}\prod_{\beta\neq\alpha}\frac{1}{(w+n_\beta)^{r_{\beta}}}\right)\right|_{w=-n_\alpha},  
  \end{align*}
interpolates $\mathcal{O}$-sum for the Schur multiple zeta values of shape $\lambda/\mu$.
\end{theorem}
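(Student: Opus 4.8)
The assertion that $I_{\pmb k}(s)$ interpolates the $\mathcal{O}$-sum means precisely that $I_{\pmb k}(m)=\mathcal{O}(\pmb k:m)$ for every $m\in\mathbb{Z}_{\ge 0}$, and this is what the plan will establish. The equivalence of the two displayed formulas for $I_{\pmb k}(s)$ was already obtained in Section \ref{section2} (Lemma \ref{explicit} together with the computation preceding it), and the series there was shown to converge for $\Re(s)\ge 0$ and to furnish the analytic continuation of $I_{\pmb k}(s)$ across $\Re(s)=0$; hence it suffices to start from the series form of Lemma \ref{explicit} and substitute $s=m$. Fix $N=(n_{ij})\in SSYT_{\lambda/\mu}$, group the equal entries of $N$ into distinct values $n_1,\dots,n_{R_N}$ with multiplicities $r_1,\dots,r_{R_N}$ as in Section \ref{section2}, and write $N^{\pmb 1}=\prod_{(i,j)\in D_{\lambda/\mu}}n_{ij}=\prod_{\alpha=1}^{R_N}n_\alpha^{r_\alpha}$.

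The crux is to identify the inner double sum over $\alpha$ and $\ell$ occurring in Lemma \ref{explicit} at $s=m$ --- where $\prod_{p=1}^{\ell-1}\frac{m+\ell-p}{\ell-p}=\binom{m+\ell-1}{\ell-1}$ --- with $\frac{1}{N^{\pmb 1}}\sum_{|\pmb\varepsilon|=m}\prod_{(i,j)\in D_{\lambda/\mu}}n_{ij}^{-\varepsilon_{ij}}$. For this I would apply Lemma \ref{lem21ver} with $r$ equal to the number of cells of $D_{\lambda/\mu}$ and with the $a_i$ taken to be the reciprocals $1/n_{ij}$, $(i,j)\in D_{\lambda/\mu}$; the distinct values among these reciprocals are exactly the $b_\alpha=1/n_\alpha$ with multiplicities $r_\alpha$, and the left-hand side of Lemma \ref{lem21ver} becomes precisely $\sum_{|\pmb\varepsilon|=m}\prod_{(i,j)}n_{ij}^{-\varepsilon_{ij}}$. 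It remains to match the coefficients $A_\alpha^{(\ell)}$ of Lemma \ref{lem21ver} with the derivative terms of Lemma \ref{explicit}: the substitution $x=-w$ sends $(1-b_\beta x)^{-1}=(1-x/n_\beta)^{-1}$ to $n_\beta(w+n_\beta)^{-1}$, sends $\frac{d^{r_\alpha-\ell}}{dx^{r_\alpha-\ell}}$ to $(-1)^{r_\alpha-\ell}\frac{d^{r_\alpha-\ell}}{dw^{r_\alpha-\ell}}$, and moves the evaluation point to $w=-n_\alpha$; pulling the constants $n_\beta^{r_\beta}$ out of the derivative and cancelling the sign against $(-b_\alpha)^{r_\alpha-\ell}=(-1)^{r_\alpha-\ell}n_\alpha^{-(r_\alpha-\ell)}$ gives
\[
A_\alpha^{(\ell)}=n_\alpha^{r_\alpha-\ell}\Bigl(\prod_{\beta\neq\alpha}n_\beta^{r_\beta}\Bigr)\frac{d^{r_\alpha-\ell}}{dw^{r_\alpha-\ell}}\left.\left(\frac{1}{(r_\alpha-\ell)!}\prod_{\beta\neq\alpha}\frac{1}{(w+n_\beta)^{r_\beta}}\right)\right|_{w=-n_\alpha}.
\]
Using $b_\alpha^m=n_\alpha^{-m}$ and $\prod_{\beta\neq\alpha}n_\beta^{r_\beta}=N^{\pmb 1}/n_\alpha^{r_\alpha}$, multiplying by $\binom{m+\ell-1}{\ell-1}$ and summing over $\alpha,\ell$ then shows that the right-hand side of Lemma \ref{lem21ver} equals $N^{\pmb 1}$ times the inner double sum of Lemma \ref{explicit} at $s=m$, which is the claimed identity.

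Substituting this back into Lemma \ref{explicit} and using $N^{\pmb k-\pmb 1}\cdot N^{\pmb 1}=N^{\pmb k}$ gives
\[
I_{\pmb k}(m)=\sum_{N\in SSYT_{\lambda/\mu}}\frac{1}{N^{\pmb k}}\sum_{|\pmb\varepsilon|=m}\prod_{(i,j)\in D_{\lambda/\mu}}\frac{1}{n_{ij}^{\varepsilon_{ij}}}=\sum_{N\in SSYT_{\lambda/\mu}}\sum_{|\pmb\varepsilon|=m}\frac{1}{N^{\pmb k+\pmb\varepsilon}}.
\]
The sum over $\pmb\varepsilon$ with $|\pmb\varepsilon|=m$ is finite, and for each such $\pmb\varepsilon$ the tableau index $\pmb k+\pmb\varepsilon$ still lies in $W_{\lambda/\mu}$ because adding non-negative integers preserves the conditions at the corners; hence each $\zeta_{\lambda/\mu}(\pmb k+\pmb\varepsilon)$ converges absolutely and the order of summation may be exchanged, so that $I_{\pmb k}(m)=\sum_{|\pmb\varepsilon|=m}\zeta_{\lambda/\mu}(\pmb k+\pmb\varepsilon)=\mathcal{O}(\pmb k:m)$. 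The one step I expect to require genuine care is the coefficient-matching between Lemmas \ref{lem21ver} and \ref{explicit} --- tracking exactly the powers of $n_\alpha$, the overall factor $N^{\pmb 1}$, and the signs produced by $x=-w$ --- whereas the convergence and the interchange of summations are routine, the $\pmb\varepsilon$-sum being finite and admissibility being stable under the addition of non-negative integers.
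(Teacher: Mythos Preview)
Your proposal is correct and follows essentially the same route as the paper: you apply Lemma~\ref{lem21ver} with $a_i=1/n_{ij}$ (so $b_\alpha=1/n_\alpha$), perform the change of variable $x=-w$ to match the derivative coefficients $A_\alpha^{(\ell)}$ against those of Lemma~\ref{explicit}, extract the factor $N^{\pmb 1}$, and conclude $I_{\pmb k}(m)=\sum_{|\pmb\varepsilon|=m}\zeta_{\lambda/\mu}(\pmb k+\pmb\varepsilon)$. Your write-up is in fact slightly more explicit than the paper's in tracking the signs and the powers of $n_\alpha$ during the coefficient match, but the argument is the same.
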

\begin{proof}
  As above, substituting $b_{\alpha}=n_{\alpha}^{-1}$ into Lemma \ref{lem21ver}, we can compute
  \begin{align*}
  &\sum_{\substack{ \sum e_{ij}=m \\ e_{ij}\ge0\ }} 
  \prod_{(i,j)\in D_{\lambda/\mu}}\left(\frac{1}{n_{ij}}\right)^{e_{ij}}\\
  &=\sum_{\alpha=1}^{R_N}\sum_{\ell=1}^{r_{\alpha}}\binom{m+\ell-1}{\ell-1}\frac{1}{n_\alpha^{m}}\frac{d^{r_\alpha-\ell}}{dx^{r_\alpha-\ell}}\left.\left((-n_\alpha)^{r_\alpha-\ell}\frac{1}{(r_\alpha-\ell)!}\prod_{\beta\neq\alpha}\frac{1}{(1-n_\beta^{-1} x)^{r_{\beta}}}\right)\right|_{x=n_\alpha}\\
  &=N^{\pmb 1}\sum_{\alpha=1}^{R_N}\sum_{\ell=1}^{r_{\alpha}}\binom{m+\ell-1}{\ell-1}\frac{1}{n_\alpha^{m+\ell}}\frac{d^{r_\alpha-\ell}}{dx^{r_\alpha-\ell}}\left.\left((-1)^{r_\alpha-\ell}\frac{1}{(r_\alpha-\ell)!}\prod_{\beta\neq\alpha}\frac{1}{(n_{\beta}- x)^{r_{\beta}}}\right)\right|_{x=n_\alpha},
  \intertext{where $N^{\pmb 1}=\prod n_{ij}=\prod n_{\alpha}^{r_\alpha}$. Changing the variable by $w=-x$, we obtain}
  &N^{\pmb 1}\sum_{\alpha=1}^{R_N}\sum_{\ell=1}^{r_{\alpha}}\binom{m+\ell-1}{\ell-1}\frac{1}{n_\alpha^{m+\ell}}\frac{d^{r_\alpha-\ell}}{dx^{r_\alpha-\ell}}\left.\left(\frac{1}{(r_\alpha-\ell)!}\prod_{\beta\neq\alpha}\frac{1}{(n_{\beta}+ w)^{r_{\beta}}}\right)\right|_{w=-n_\alpha}.
  \end{align*}
Therefore, we have  
  \begin{align*}
      I_{\pmb k}(m)&=\sum_{N\in SSYT_{\lambda/\mu}}\frac{1}{N^{\pmb k-\pmb 1}}\sum_{\alpha=1}^{R_N}\sum_{\ell=1}^{r_{\alpha}}\frac{1}{n_{\alpha}^{m+\ell}}\prod_{p=1}^{\ell-1} \frac{m+\ell-p}{\ell-p}\frac{d^{r_\alpha-\ell}}{dw^{r_\alpha-\ell}}\left.\left(\frac{1}{(r_\alpha-\ell)!}\prod_{\beta\neq\alpha}\frac{1}{(w+n_\beta)^{r_{\beta}}}\right)\right|_{w=-n_\alpha}\\
      &=\sum_{N\in SSYT_{\lambda/\mu}}\frac{1}{N^{\pmb k-\pmb 1}}\sum_{\alpha=1}^{R_N}\sum_{\ell=1}^{r_{\alpha}}\frac{1}{n_{\alpha}^{m+\ell}}\binom{m+\ell-1}{\ell-1}\frac{d^{r_\alpha-\ell}}{dw^{r_\alpha-\ell}}\left.\left(\frac{1}{(r_\alpha-\ell)!}\prod_{\beta\neq\alpha}\frac{1}{(w+n_\beta)^{r_{\beta}}}\right)\right|_{w=-n_\alpha}\\
      &=\sum_{N\in SSYT_{\lambda/\mu}}\frac{1}{N^{\pmb k}}\sum_{\substack{ \sum e_{ij}=m \\ e_{ij}\ge0\ }} 
  \prod_{(i,j)\in D_{\lambda/\mu}}\left(\frac{1}{n_{ij}}\right)^{e_{ij}}.
  \end{align*}  
  This ensures that $I_{\pmb k}(s)$ interpolates the Ohno {\color{blue}functions} to the Schur multiple zeta functions. 
\end{proof}  
Finally, we obtain the duality formula for $I_{\pmb k}(s)$ as complex functions:
\begin{theorem}
\label{dualityschur}
Let $\lambda$ and $\mu$ be two partitions such that $\lambda_i\ge\mu_i$ for all $i$, and let $\delta=\lambda/\mu$. Let ${\pmb k}^{\dagger}$ be the dual tableau of ${\pmb k}\in I_\delta^D$. Then, for $s\in \mathbb C$ we have
\[
I_{\pmb k}(s)=I_{{\pmb k}^{\dagger}}(s).
\]
\end{theorem}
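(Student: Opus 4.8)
The natural strategy is to reduce the identity for $I_{\pmb k}(s)$ to the already-established duality $\mathcal{O}(\pmb k:\ell)=\mathcal{O}(\pmb k^\dagger:\ell)$ of Theorem~\ref{Thmohno} by an interpolation argument: both $I_{\pmb k}(s)$ and $I_{\pmb k^\dagger}(s)$ are functions of $s$ that agree with the $\mathcal{O}$-sums at all non-negative integers $s=m$, so if one can show they are each (the boundary value of) a holomorphic function on a suitable region and that they do not grow too fast, Carlson's theorem forces them to coincide on all of $\mathbb C$. First I would record that, by Theorem~\ref{integralrepschur}, $I_{\pmb k}(m)=\mathcal{O}(\pmb k:m)$ and $I_{\pmb k^\dagger}(m)=\mathcal{O}(\pmb k^\dagger:m)$ for every $m\in\mathbb Z_{\ge0}$, and these two sequences are equal by Theorem~\ref{Thmohno}. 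So it remains to upgrade this equality at the integers to an equality of the interpolating functions.

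The key analytic input is the explicit series form from Lemma~\ref{explicit}: on the strip $-1<\Re(s)<0$,
\[
I_{\pmb k}(s)=\sum_{N\in SSYT_\delta}\frac{1}{N^{\pmb k-\pmb 1}}\sum_{\alpha=1}^{R_N}\sum_{\ell=1}^{r_\alpha}\frac{1}{n_\alpha^{s+\ell}}\prod_{p=1}^{\ell-1}\frac{s+\ell-p}{\ell-p}\,A_\alpha^{(\ell)}(N),
\]
where the inner data $A_\alpha^{(\ell)}(N)$ is independent of $s$. As noted in the text following Lemma~\ref{explicit}, grouping the terms by the value of $\alpha$ and $\ell$ expresses $I_{\pmb k}(s)$ as a finite sum of products of polynomials in $s$ with Mordell--Tornheim / root-system zeta functions of type $A$, which admit meromorphic continuation to all of $\mathbb C$ (cf.\ \cite{mt}); this gives a genuine meromorphic function $I_{\pmb k}(s)$ on $\mathbb C$, and likewise $I_{\pmb k^\dagger}(s)$. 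I would then argue that these continuations have at worst polynomial growth in vertical strips (again inherited from the Mordell--Tornheim pieces) and are holomorphic in a half-plane $\Re(s)>-1$ (the $s=0$ computation shows no pole at the integers, and the polynomial factors $\prod_{p=1}^{\ell-1}(s+\ell-p)$ only contribute zeros, not poles). Applying Carlson's theorem to the difference $I_{\pmb k}(s)-I_{\pmb k^\dagger}(s)$, which is holomorphic on $\Re(s)>-1$, of exponential type $<\pi$ in that half-plane (indeed it has at most polynomial growth), and vanishes at all $s=m\in\mathbb Z_{\ge0}$, we conclude it is identically zero; meromorphic continuation then propagates the identity to all $s\in\mathbb C$.

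The main obstacle is the verification of the growth hypothesis needed for Carlson's theorem — one must control $|I_{\pmb k}(\sigma+it)|$ as $|t|\to\infty$ uniformly for $\sigma$ in a half-plane like $\Re(s)\ge -1/2$, showing it is $O(e^{c|t|})$ with $c<\pi$ (polynomial growth suffices and is what one expects). This requires either (i) a careful estimate on the original integral representation $-\tfrac{\sin(\pi s)}{\pi}\sum_N N^{-(\pmb k-\pmb 1)}\int_0^\infty w^{-s-1}\prod(w+n_{ij})^{-1}\,dw$, where the $\sin(\pi s)$ prefactor is exactly cancelled by the $\Gamma$-type growth of the Mellin integral (mirroring \cite{ko} and \cite[Lemma~2.2]{hmo}), or (ii) invoking known vertical-strip bounds for the zeta functions of root systems of type $A$ from \cite{mt} and summing the finitely many polynomial-times-zeta pieces. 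I would follow route (i) for transparency: bound the $w$-integral by splitting at $w=1$, use $|w^{-s-1}|=w^{-\Re(s)-1}$, and pair the resulting beta-function values against $|\sin(\pi s)|$ to get a bound polynomial in $|t|$, uniformly in the finitely many shapes of the partial-fraction expansion. Everything else — the reduction to integer values, the meromorphic continuation, and the final application of Carlson — is routine once this estimate is in hand.
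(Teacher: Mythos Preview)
Your plan is correct but follows a different path from the paper. Both arguments begin identically: Theorems~\ref{integralrepschur} and~\ref{Thmohno} give $I_{\pmb k}(m)=I_{\pmb k^\dagger}(m)$ for every $m\in\mathbb Z_{\ge0}$. You then reach for Carlson's theorem to upgrade this to all $s\in\mathbb C$, which obliges you to verify the vertical-strip growth bound you correctly flag as the main obstacle. The paper instead notes that the explicit series in Lemma~\ref{explicit} exhibits $I_{\pmb k}(s)$ as a Dirichlet series in $s$ and appeals to the uniqueness theorem for Dirichlet series \cite[Theorem~11.3]{ap}: two Dirichlet series agreeing on a sequence $s_k\to+\infty$ coincide on their half-plane of absolute convergence, and meromorphic continuation then carries the identity to all of $\mathbb C$. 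This bypasses the growth estimate entirely. Your Carlson route is more general in that it does not rely on any Dirichlet-series structure, and the required polynomial bound is in fact immediate from Lemma~\ref{explicit} (each summand is a polynomial in $s$ of degree at most $|D_\delta|-1$ times a factor $n_\alpha^{-s}$ bounded on $\Re(s)\ge 0$), so your route~(ii) already suffices and the integral-based route~(i) is an unnecessary detour. The paper's argument is the more economical of the two.
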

\begin{proof}
 By Theorem \ref{Thmohno} and Theorem \ref{integralrepschur}, we have 
 $I_{\pmb{k}}(s)=I_{\pmb{k}^\dagger}(s)$ for $s\in\mathbb{Z}_{\ge 0}$.
 As
 \[
 \sum_{N\in SSYT_{\delta}}\frac{1}{N^{\pmb k-\pmb 1}}\left(\sum_{\alpha=1}^{R_N}\sum_{\ell=1}^{r_{\alpha}}\frac{1}{n_{\alpha}^{m+\ell}}\binom{m+\ell-1}{\ell-1}\frac{d^{r_\alpha-\ell}}{dw^{r_\alpha-\ell}}\left.\left(\frac{1}{(r_\alpha-\ell)!}\prod_{\beta\neq\alpha}\frac{1}{(w+n_\beta)^{r_{\beta}}}\right)\right|_{w=-n_\alpha}\right)
 \]
 is a Dirichlet series for each $(i,j)\in D_{\lambda/\mu}$, the function $I_{\pmb{k}}(s)$ is also a Dirichlet series.  
 By the uniqueness theorem for the Dirichlet series (see \cite[Theorem 11.3]{ap}), we have
 $I_{\pmb{k}}(s)=I_{\pmb{k}^\dagger}(s)$ for $\Re(s)> -1$ and ${\pmb k} \in I_{\delta}^{\rm D}$. Moreover, $I_{\pmb k}(s)$ is meromorphically continued to the whole space of $\mathbb C$. Thus, the assertion is proved.
\end{proof}
{\color{blue}\begin{remark}
When we put $\lambda=(\{1\}^r)$ and $\mu=\emptyset$ for a positive integer $r$, then Theorem \ref{dualityschur} implies Theorem \ref{interpolation}. Furthermore, substituting non-negative integers for $s$, we obtain Theorem \ref{ohno}.
\end{remark}
}
\section*{Acknowledgement}
This work was supported by
Grant-in-Aid for Scientific Research (A) (Grant Number: JP21H04430), 
Grant-in-Aid for Scientific Research (B) (Grant Number: JP18H01110),
Grant-in-Aid for Scientific Research (C) (Grant Number: JP18K03223, JP19K03437 and JP22K03274) and Grant-in-Aid for Early-Career Scientists (Grant Number: JP22K13900).
This work was supported by the Research Institute for Mathematical Sciences,
an International Joint Usage/Research Center located in Kyoto University.

\end{document}